\newtheorem{thm}{Theorem}[section]
\theoremstyle{definition}
\newtheorem{defn}[thm]{Definition}
\newcommand{\sbu}{{\raisebox{1pt}{\scaleto{\bullet}{2.5pt}}}}
\newcommand{\ZZ}{\mathbb{Z}}
\newcommand{\cH}{\mathcal{H}}
\newcommand{\cO}{\mathcal{O}}
\newcommand{\cP}{\mathcal{P}}
\newcommand{\comment}[1]{}
\newcommand{\bs}{\boldsymbol}
\newcommand{\st}{\scriptstyle}
\newcommand{\Fr}{\mathrm{Fr}}
\newcommand{\cpb}{\mathcal{P}^{\sbu}}
\newcommand{\cqb}{\mathcal{Q}^{\sbu}}
\newcommand{\Grd}{G^r_d}
\newcommand{\Grdab}{G^{r,\alpha,\beta}_d}
\newcommand{\hide}[1]{}
\DeclareMathOperator{\Pic}{Pic}
\DeclareMathOperator{\tab}{tab}
\DeclareMathOperator{\vs}{VS}
\DeclareMathOperator{\PT}{PT}
\DeclareMathOperator{\Fl}{Fl}
\newtheorem{Definition}[thm]{Definition}
\newenvironment{definition}
  {\begin{Definition}\rm}{\end{Definition}}
\newtheorem{Example}[thm]{Example}
\newenvironment{example}
  {\begin{Example}\rm}{\end{Example}}
\newtheorem{Fact}[thm]{Fact}
\newenvironment{fact}
  {\begin{Fact}\rm}{\end{Fact}}
\newtheorem{Theorem}[thm]{Theorem}
\newenvironment{theorem}
  {\begin{Theorem}\rm}{\end{Theorem}}
\newtheorem{Lemma}[thm]{Lemma}
\newenvironment{lemma}
  {\begin{Lemma}\rm}{\end{Lemma}}
\newtheorem{Remark}[thm]{Remark}
\newenvironment{remark}
  {\begin{Remark}\rm}{\end{Remark}}
\newtheorem{Proposition}[thm]{Proposition}
\newenvironment{proposition}
  {\begin{Proposition}\rm}{\end{Proposition}}
\newtheorem{Corollary}[thm]{Corollary}
\newenvironment{corollary}
  {\begin{Corollary}\rm}{\end{Corollary}}
\theoremstyle{remark}
\newtheorem{obs}[thm]{Observation}
\newcommand \defnow[1]{\begin{definition}{#1}\end{definition}}
\newcommand \exnow[1]{\begin{example}{#1}\end{example}}
\newcommand \enumnow[1]{\begin{enumerate}{#1}\end{enumerate}}
\title{Euler characteristics of Brill-Noether varieties}
\author[M. Chan]{Melody Chan}\address{Department of Mathematics, Brown University, Box
1917, Providence, RI 02912}\email{melody\_chan@brown.edu}
\author[N. Pflueger]{Nathan Pflueger}\address{Department of Mathematics and Statistics, Amherst College, Amherst, MA 01002}\email{npflueger@amherst.edu}
\date{\today}
\begin{document}

\begin{abstract}
We prove an enumerative formula for the algebraic Euler characteristic of Brill-Noether varieties, parametrizing degree $d$ and rank $r$ linear series on a general genus $g$ curve, with ramification profiles specified at up to two general points. Up to sign, this Euler characteristic is the number of standard set-valued tableaux of a certain skew shape with $g$ labels.  We use a flat degeneration via the Eisenbud-Harris theory of limit linear series, relying on moduli-theoretic advances of Osserman and Murray-Osserman; the count of set-valued tableaux is an explicit enumeration of strata of this degeneration. 
\bigskip

\noindent {\bf 2010 Mathematics Subject Classification:} 14H51, 14M15, 05E05
\end{abstract}

\maketitle


\section{Introduction}

Fix an algebraically closed field $k$ of characteristic $0$.  Let $X$ be a smooth, proper curve of genus $g$ over $k$, and let $p,q\in X$ be distinct closed points.   Throughout the paper, $r$ and $d$ always denote nonnegative integers, and $\alpha=(\alpha_0,\cdots,\alpha_r)$ and $\beta=(\beta_0,\cdots,\beta_r) \in\ZZ_{\ge0}^{r+1}$ always denote nondecreasing sequences. 

\defnow{Fix $r,d,\alpha,$ and $\beta$ as above.
We write $\Grdab(X,p,q)$ for the {\em moduli space of linear series} of rank $r$ and degree $d$ over $X$, with ramification at least $\alpha$ at $p$ and at least $\beta$ at $q$.} \noindent  We refer to \cite{acgh} for definitions and background, and to \cite{clpt} for more details in the setup that will be most relevant to this paper.

The celebrated Brill-Noether Theorem (first stated and proved without marked points \cite{griffiths-harris-variety} and later extended to curves with one or more marked points \cite{eh-divisors}) concerns the dimension of these varieties: if $(X,p,q)$ is a general twice-marked curve, then when $\Grdab(X,p,q)$ is nonempty, its dimension is given by the Brill-Noether number $\rho$ (see \ref{ss:notation} for the definition of $\rho$ and Theorem \ref{thm:osserman-rhohat} for a precise statement of the Brill-Noether theorem for twice-marked curves). In the case $\rho = 0$, there is an interesting combinatorial version of the Brill-Noether theorem for twice-marked curves (originally due to Castelnuovo in the no-marked-points situation; see \cite[\S 3.1]{tarasca-codim-2} for the situation with marked points or \cite[Theorem 6.3]{clpt} for proofs directly in terms of tableaux): the variety $\Grdab(X,p,q)$ is a union of reduced points, where the number of points is equal to the number of skew standard Young tableaux on a skew shape $\sigma$ that we define below. This paper generalizes this statement to all dimensions (that is, all values of $\rho$), thereby demonstrating an intriguing connection between the geometry of Brill-Noether varieties and combinatorial aspects of the skew shape $\sigma$. Rather than counting points in a $0$-dimensional variety, we compute the algebraic Euler characteristic.

For a proper variety $G$ let $\chi(G)$ denote the algebraic Euler characteristic of $G$, by which we mean the Euler characteristic of the structure sheaf: 
$$\chi(G) = \chi(G, \mathcal{O}_G) = \sum_i (-1)^i h^i(G, \mathcal{O}_G).$$
Our main theorem computes this Euler characteristic of $\Grdab(X,p,q)$.  To state it requires the following definition.

\begin{definition}\label{def:sigma} Fix integers $g,r,d\ge 0$ and $\alpha = (\alpha_0,\ldots,\alpha_r)$ and $\beta = (\beta_0,\ldots,\beta_r) \in \ZZ^{r+1}_{\ge 0}$ nondecreasing sequences.  We let $\sigma = \sigma(g,r,d,\alpha,\beta)$ be the skew Young diagram (Definition~\ref{def:young}) with boxes
$$\{(x,y)\in\ZZ^2 \colon \,0\le y\le r, \,\,-\alpha_y \le x < g-d+r+\beta_{r-y}\}.$$
\end{definition}
\noindent An example is shown in Figure~\ref{fig:sigma}. See Section~\ref{sec:prelim} for preliminaries on Young diagrams and tableaux.

\begin{figure}
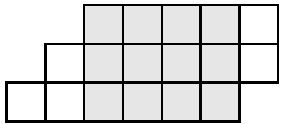

$$
\ytableausetup{boxsize=.5cm} 
\begin{ytableau}
 \none & \none & *(gray!20) &  *(gray!20) & *(gray!20)  & *(gray!20)  &\\
 \none & &  *(gray!20) &  *(gray!20) & *(gray!20)  & *(gray!20)  &  \\
  &  & *(gray!20) &  *(gray!20) & *(gray!20)   & *(gray!20) 
 \end{ytableau}
$$

\caption{The skew Young diagram $\sigma(g,r,d,\alpha,\beta)$ when $g-d=2, r=2, \alpha=(0,1,2),\beta=(0,1,1).$ The leftmost box in the first row is box $(0,0)$.}\label{fig:sigma}
\end{figure}

\begin{thm}\label{thm:main} Let $k$ be an algebraically closed field of characteristic $0$, and fix integers $g,r,d\ge 0$ and $\alpha = (\alpha_0,\ldots,\alpha_r)$ and $\beta = (\beta_0,\ldots,\beta_r) \in \ZZ^{r+1}_{\ge 0}$ nondecreasing sequences.
  For a general twice-pointed smooth, proper curve $(X,p,q)$ over $k$ of genus $g$, 
$$\chi(\Grdab(X,p,q)) = (-1)^{g-|\sigma|}\cdot \#(\text{standard set-valued tableaux on $\sigma$ of content $\{1,\ldots,g\}$}).$$
\end{thm}
\noindent Set-valued tableaux are defined precisely in Definition~\ref{def:young}\eqref{it:std}; they were first introduced by Buch to capture the $K$-theory of the Grassmannian \cite{buch-littlewood-richardson}. 
In fact, the variety is empty if and only if $|\sigma| > g$, which is consistent with the above theorem; see Theorem~\ref{thm:osserman-rhohat}.

It is intriguing that the Euler characteristic in Theorem \ref{thm:main} is (up to sign) given by the answer to an enumerative problem. Indeed, our proof is enumerative in nature: we consider a degeneration of the variety $\Grdab(X,p,q)$ to a reducible variety, whose irreducible components can be enumerated with the aid of tableaux.

Independently, a determinantal formula for the Euler characteristic of twice-pointed Brill-Noether varieties has been established by Anderson-Chen-Tarasca using degeneracy locus methods for maps of vector bundles \cite{anderson-chen-tarasca-k-classes}.  They compute the $K$-class of the structure sheaf of the image of $\Grdab(X,p,q)$ in the Jacobian, from which the Euler characteristic may be computed.  In a separate article \cite{chan-pflueger-combinatorial}, we demonstrate combinatorially that the determinantal formula of \cite{anderson-chen-tarasca-k-classes} is equivalent to the enumerative formula of our paper; see that article for more discussion.

Our proof of Theorem~\ref{thm:main} uses degenerations, limit linear series, and combinatorics.  When a curve degenerates over a suitable one-parameter base to a nodal curve of compact type, the space of linear series on it admits a corresponding degeneration to a space of {\em limit linear series}. That this degeneration may be taken to be flat, under appropriate hypotheses, relies deeply on recent developments of Osserman and Murray-Osserman \cite{murray-osserman, osserman-lls-stack} in the foundations of limit linear series moduli stacks.  The flatness is of course crucial for comparing Euler characteristics of general and special fiber.  The degeneration we use is to an elliptic chain, an idea going back to Welters \cite{welters}.  

The base case of the degeneration therefore involves the linear series on a twice-pointed elliptic curve.  Such spaces are 
examples of relative Richardson varieties defined over the Picard variety of the elliptic curve.  Relative Richardson varieties are defined and studied in-depth in a separate article \cite{chan-pflueger-relative}. Briefly, they are intersections of two Schubert varieties in the relative flag variety $\Fl(\cH)$ of a vector bundle $\cH$, defined with respect to two {\em versal} flag bundles in $\cH$; versality is a condition that generalizes transversality in every fiber. 


To compute $\chi$ of the space of limit linear series on an elliptic chain, we stratify the space and introduce {\em pontableaux} (extending \cite{clpt}), combinatorial objects that encode the strata.  Pontableaux are a generalization of set-valued tableaux.  We compute the Euler characteristics of the strata, together with the M\"obius function on the set of closed strata ordered by containment; what we are relying on is the well-known fact that $\chi$ may be computed by cutting and pasting (Proposition~\ref{prop:cut-n-paste}).

Overall, our approach is aligned with and extends the work of Castorena-L\'opez-Teixidor \cite{castorena-lopez-teixidor}, and our recent joint work with L\'opez and Teixidor \cite{clpt}. The latter article proves Theorem~\ref{thm:main} in the special case $\dim \Grdab(X,p,q) = 1$.  
Incidentally, the translation between the statements of Theorem~\ref{thm:main} and the main theorem in \cite{clpt} is not trivial, due to the introduction of set-valued standard tableaux to this paper.   We hope that the combinatorial understanding of the degeneration of $\Grdab(X,p,q)$ provided in this paper shall provide the foundation for deeper analysis of the geometry of $\Grdab(X,p,q)$.

\section{Preliminaries}\label{sec:prelim}

\subsection{Notation} \label{ss:notation}

We collect here some notation and definitions that will be used throughout the paper.

\begin{eqnarray*}
(X,p,q) && \mbox{is a twice-marked curve of genus $g$.}\\
g,r,d && \mbox{are nonnegative integers.}\\
&& \mbox{(The genus of $X$, and the rank and degree of the linear series in question)}\\
\alpha &=& (\alpha_0, \alpha_1, \cdots, \alpha_r) \mbox{ is a nondecreasing sequence of nonnegative integers.}\\
&& \mbox{(imposed ramification at $p$)}\\
\beta &=& (\beta_0,\beta_1,\cdots,\beta_r) \mbox{  is a nondecreasing sequence of nonnegative integers.}\\
&& \mbox{(imposed ramification at $q$)}\\
\sigma &=& \{(x,y) \in \ZZ^2:\ 0 \leq y \leq r,\ -\alpha_y \leq x < g-d+r + \beta_{r-y}\}\\
&& \mbox{(The skew shape associated to $g,r,d,\alpha,\beta$; see Remark \ref{rem:sigma-is-skew})}\\
\rho &=& g - (r+1)(g-d+r) - \sum_{i=0}^r \alpha_i - \sum_{i=0}^r \beta_i\\
&& \mbox{(the Brill-Noether number)}\\
\hat{\rho} &=& g - |\sigma|\\
&=& g - \sum_{y=0}^r \max(0, \alpha_y + \beta_{r-y} + (g-d+r))\\
&& (\mbox{a test for nonemptiness of $\Grdab(X,p,q)$; see Theorem \ref{thm:osserman-rhohat} (1)})
\end{eqnarray*}

Note that $\rho \geq \hat{\rho}$, with equality if and only if $\alpha_y + \beta_{r-y} \geq - (g-d+r)$ for all $y$. In particular, when $g-d+r \geq 0$, it is always the case that $\rho = \hat{\rho}$.

The curve $X$ will sometimes be a \textit{smooth} curve of genus $g$, and at other times it will be a chain of $g$ elliptic curves. We will specify which is meant in each context.

The notation $\Grdab(X,p,q)$ will be used to refer either to the usual space of limit linear series with imposed ramification (when $X$ is smooth), or to the Eisenbud-Harris space of limit linear series (see Definition \ref{def:ehscheme}).

We will sometimes define other ramification sequences $\alpha^1,\alpha^2,\cdots, \alpha^g$ and $\beta^1,\beta^2,\cdots,\beta^g$ (e.g. in Definition \ref{def:ehscheme}), which should not be confused with the individual elements $\alpha_i,\beta_i$ of the original ramification sequences $\alpha,\beta$. In this situation we will typically require that $\alpha^1 = \alpha$ and $\beta^g = \beta$.

Finally, we warn the reader of a mild abuse of notation in \S\ref{ss:eh-strat}: we will use the symbols $\rho^1, \rho^2, \cdots, \rho^n$ to denote sequences of integers (whose elements are $\rho^n_i$), which are not related to the Brill-Noether number $\rho$ (with no subscripts of superscripts). 

\subsection{Limit linear series} \label{sec:prelim-lls}

We recall some preliminaries on the theory of Eisenbud-Harris {limit linear series} \cite{eh-lls}, following \cite{clpt}. This theory applies to all reduced, nodal curves of compact type; but we describe here only the case of elliptic chains.  A twice-marked {\em elliptic chain} $(X,p,q)$ of genus $g$ is a proper, reduced, nodal curve $X$ obtained by taking twice-marked genus 1 curves $(E_1,p = p_1,q_1),\ldots, (E_g,p_g,q_g = q)$ and gluing $q_{i}$ to $p_{i+1}$ nodally.  We say that $(X,p,q)$ is 
{\em generic} if $p_i-q_i$ is not torsion in $\Pic^0(E_i)$.

\begin{definition} \label{def:ehscheme}
Let $(X,p,q)$ be a twice-marked elliptic chain.
The {\em Eisenbud-Harris scheme of limit linear series} $\Grdab(X,p,q)$ is the subscheme of $\prod_{i=1}^g \Grd(E_i)$ obtained as a union
$$\bigcup \prod_{i=1}^g G_d^{r,\alpha^i,\beta^i} (E_i, p_i , q_i).$$
The union above ranges over choices of ramification profiles $(\alpha^i,\beta^i)_{i=1}^g$
with $\alpha^1 = \alpha$ and $\beta^g = \beta$, such that
$$\beta^i_j + \alpha^{i+1}_{r-j} \ge d-r,$$ for each $j=0,\ldots,r$ and each $i=1,\ldots,g-1$.
\end{definition}
The $k$-points of the scheme $\Grdab(E,p,q)$ correspond to isomorphism classes of {\em limit linear series} on the elliptic chain $(X,p,q)$. A limit linear series may be described as a $g$-tuple $(L_1,L_2,\cdots,L_g)$, where $L_i$ is a linear series on the elliptic curve $E_i$ called the {\em $E_i$-aspect}. A limit linear series is called {\em refined} if, denoting the ramification sequence of the $E_i$-aspect at $p_i$ by $\alpha^i$ and at $q_i$ by $\beta^i$, the equation
$$
\beta^i_j + \alpha^{i+1}_{r-j} = d-r
$$
holds for all $1 \leq i \leq g-1$ and $0 \leq j \leq r$. A limit linear series is called {\em coarse} if it is not refined.

\subsection{Set-valued tableaux}\label{ss:svt}


Fix the partial order $\preceq$ on $\ZZ^2$ given by $(x,y)\preceq (x',y')$ if $x\le x'$ and $y\le y'$.  

\begin{definition}\label{def:young}\mbox{}
\begin{enumerate}
\item 
A skew Young diagram is a finite subset $\sigma \subset \ZZ^2$ that is closed under taking intervals.  In other words, $\sigma$ has the property that if $(x,y)$ and $(x',y')\in  \sigma$ with $(x,y)\preceq (x',y')$, then $$\{(x'',y''):(x,y)\preceq (x'',y'') \preceq (x',y')\}\subseteq \sigma.$$
\item A skew Young diagram is called a {\em Young diagram} if $\sigma$ has a unique minimal element.
\end{enumerate}
\end{definition}

Skew Young diagrams are sometimes also called {\em skew shapes}, and skew Young diagrams having a unique minimal element will sometimes be called {\em straight shapes} for emphasis. 
In accordance with the English notation for Young diagrams, we will draw the points of $\ZZ^2$ arranged with $x$-coordinate increasing from left to right, and $y$-coordinate {\em increasing} from top to bottom, e.g.
$$
\begin{array}{ccc}
(0,0) & (1,0) & \cdots \\
(0,1) & (1,1) & \\
\vdots & &
\end{array}
$$
Furthermore, we will draw, and refer to, the members of $\sigma$ as \emph{boxes}, and we let $|\sigma|$ denote the number of boxes in $\sigma$.  We do not require that $\sigma$ be connected. 


\begin{remark}\label{rem:sigma-is-skew}
Let $g,r,d,\alpha,\beta$ and $\sigma$ be as in \S\ref{ss:notation}. It follows from $\alpha, \beta$ being nondecreasing that $\sigma$ is indeed a skew shape. The number of boxes is
$$|\sigma| = \sum_{y=0}^r \max\left(0,\alpha_y + \beta_{r-y} +  (g\!-\!d\!+\!r)\right).$$
\end{remark}

\begin{definition} \label{def:tableau}
A {\em tableau} of shape $\sigma$ is an assignment $T$ of a positive integer, called a {\em label}, to each box of $\sigma$.  \begin{enumerate}
\item A tableau $T$ of shape $\sigma$ is {\em semistandard} if the rows of $\sigma$ are weakly increasing from left to right, and the columns of $\sigma$ are strictly increasing from top to bottom.
\item A tableau $T$ of shape $\sigma$ is {\em standard} if it is semistandard and furthermore each integer $1,\ldots,|\sigma|$ occurs exactly once as a label.
\end{enumerate}

\end{definition}

\begin{definition}\cite{buch-littlewood-richardson} \label{def:svt}
A {\em set-valued tableau} of shape $\sigma$ is an assignment of a nonempty finite set of positive integers  to each box of $\sigma$. 

Given sets $S,T\subseteq \ZZ_{>0}$, we write $S<T$ if $\max(S)<\min(T)$, and we write $S\le T$ if  $\max(S)\le\min(T)$.  Then we extend the definitions of {\em semistandard} and {\em standard} tableaux to set-valued tableaux.
\begin{enumerate}
\item \label{it:sstd}A set-valued tableau $T$ of $\sigma$ is {\em semistandard} if the rows of $\sigma$ are weakly increasing from left to right, and the columns of $\sigma$ are strictly increasing from top to bottom.
\item \label{it:std} A set-valued tableau $T$ of $\sigma$ {\em standard} if it is semistandard and furthermore the labels are pairwise disjoint sets with union $\{1,\ldots,r\}$ for some $r \ge |\sigma|$.
\end{enumerate}
\end{definition}
The \emph{content} of a set-valued tableau is the multiset of labels occurring in it. So a semistandard tableau is standard if and only if the content is $\{1,2,\cdots,|\sigma|\}$. We call a tableau \emph{almost-standard} if it is semistandard and no label occurs more than once (i.e. all elements of the content have multiplicity $1$).

\section{Linear series on an elliptic curve}
\label{sec:ls-elliptic}

Fix integers $g,d,r\ge 0$, and nondecreasing sequences $\alpha$ and $\beta\in \ZZ_{\ge0}^{r+1}$.  
In this section, we establish basic results on the geometry of the moduli space $\Grdab(X,p,q)$, especially in the case where $X$ has genus $1$.

We record the following basic facts about the nonemptiness, dimension, reducedness, and irreducibility of the variety $\Grdab(X,p,q)$. See \S\ref{ss:notation} for definitions of $\rho$ and $\hat{\rho}$. Part (1) of this theorem is what is usually called the Brill-Noether theorem for curves with two marked points\footnote{Often the phrase ``Brill-Noether theorem'' refers specifically the the case where there are no marked points and no ramification is imposed (as it was originally formulated), while this version, or a generalization to more marked points in characteristic zero, is called the ``extended Brill-Noether theorem.''}. It is discussed in \cite{eh-divisors} with a characteristic $0$ hypothesis, while \cite{osserman-simple} gives a characteristic-free argument.

\begin{thm}\cite[Theorem 1.1, Lemma 2.1]{osserman-simple}\label{thm:osserman-rhohat}
Fix $g,r,d,\alpha,$ and $\beta.$ 

\enumnow{
\item\label{it:nonempty} For a general twice-pointed curve $[(X,p,q)]\in \mathcal{M}_{g,2}$, the variety $\Grdab(X,p,q)$ is nonempty if and only if $\hat{\rho}\ge 0$. If it is nonempty, then it has pure dimension $\rho$.
\item\label{it:nonempty-E} When $X=E$ is a curve of genus 1, to ensure that statement~\eqref{it:nonempty} holds, it suffices to take points $p,q\in E$ such that $p-q$ is not $d'$-torsion in $\Pic^0(E)$ for any $d'\le d$.  Furthermore, in this situation $\Grdab(E,p,q)$ is reduced and irreducible.
}
\end{thm}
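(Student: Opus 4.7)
The plan is to prove part~(2) directly by viewing $\Grdab(E,p,q)$ as a Schubert-intersection scheme over $\Pic^d(E)$, and then to deduce part~(1) from~(2) by degenerating $(X,p,q)$ to a generic elliptic chain and invoking the Eisenbud-Harris theory of limit linear series.

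For part~(2), I note first that $\deg L = d \ge 1$ on the elliptic curve $E$ forces $h^1(E,L)=0$ and $h^0(E,L)=d$, so the pushforward $\pi_*\mathcal{L}$ of a Poincar\'e bundle $\mathcal{L}$ on $E\times\Pic^d(E)$ is a vector bundle of rank~$d$, and $\Grdab(E,p,q)$ embeds in the Grassmann bundle $\Gr(r+1,\pi_*\mathcal{L})$. Fiberwise over $[L]\in\Pic^d(E)$, the vanishing filtrations at $p$ and at $q$ define two complete flags $F^p_\bullet, F^q_\bullet$ on $H^0(E,L)$, and the ramification conditions $\alpha$ and $\beta$ cut out Schubert subvarieties of $\Gr(r+1,H^0(E,L))$ with respect to these flags; the fiber of $\Grdab(E,p,q)$ over $[L]$ is the intersection. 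These two flags fail to be transverse at $[L]$ precisely when $H^0(E,L(-ap-bq))\neq 0$ for some $a+b=d$, i.e., when $L\cong \mathcal{O}_E(ap+bq)$ for some $a,b\ge 0$ with $a+b=d$. This yields a candidate set $Z$ of $d+1$ bad line bundles; the hypothesis that $p-q$ is not $d'$-torsion in $\Pic^0(E)$ for any $d'\le d$ ensures that these $d+1$ classes are genuinely distinct, since two of them labeled by $b\neq b'$ differ by $(b-b')(q-p)$ with $1\le |b-b'|\le d$.

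Let $U := \Pic^d(E)\smallsetminus Z$. Over $U$, the two flags are transverse, so each nonempty fiber of $\Grdab(E,p,q)$ is a transverse Richardson variety in $\Gr(r+1,d)$, hence smooth, irreducible, reduced, and of pure dimension $\rho - 1$; consequently the preimage of $U$ is smooth, irreducible, reduced, and of pure dimension $\rho$ (when nonempty). Over each bad $[L]\in Z$ the flags are almost-transverse, failing transversality in exactly one pair of complementary steps, and a direct Schubert-calculus dimension estimate for almost-transverse intersections controls the fiber dimension and shows that no new top-dimensional irreducible components are contributed, so the closure of the preimage of $U$ is all of $\Grdab(E,p,q)$. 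Irreducibility follows. Reducedness follows because $\Grdab(E,p,q)$ is locally cut out within the Grassmann bundle by the expected number of determinantal conditions, hence is Cohen-Macaulay, and it is generically reduced on $U$. Nonemptiness of some transverse Richardson fiber over $U$ is then equivalent to the Schubert compatibility condition, which for $g=1$ is $\hat\rho\ge 0$.

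For part~(1), degenerate a general $(X,p,q)\in\mathcal{M}_{g,2}$ through a one-parameter family in $\overline{\mathcal{M}}_{g,2}$ to a generic elliptic chain $(X_0,p,q)=(E_1,p,q_1)\cup\cdots\cup(E_g,p_g,q)$ with each $p_i-q_i$ non-$d'$-torsion in $\Pic^0(E_i)$ for all $d'\le d$. Using the Eisenbud-Harris scheme of Definition~\ref{def:ehscheme} together with the flatness results of Osserman and Murray-Osserman for the associated moduli stacks, the dimension bounds from part~(2) applied to each aspect $G^{r,\alpha^i,\beta^i}_d(E_i,p_i,q_i)$, combined with the gluing constraints $\beta^i_j+\alpha^{i+1}_{r-j}\ge d-r$, yield that every component of the special fiber has dimension at most $\rho$, with nonempty strata corresponding exactly to compatible ramification data with $\hat\rho\ge 0$. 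Semicontinuity promotes these bounds to the generic smooth fiber, while flatness promotes nonempty special-fiber components to nonempty components of $\Grdab(X,p,q)$ of the same dimension. The main obstacle throughout is the careful handling of the $d+1$ bad fibers over $Z$ in part~(2): one must verify that almost-transverse Schubert intersections contribute no additional top-dimensional irreducible components and do not spoil reducedness, which is precisely where the non-$d'$-torsion hypothesis is essential to keep the bad classes distinct and the almost-transversality ``mild.''
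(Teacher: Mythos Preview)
The paper does not prove this theorem; it is quoted from \cite{osserman-simple} (Theorem~1.1 and Lemma~2.1). So there is no in-paper proof to compare against, only the remark that Osserman's argument also proceeds by degeneration to elliptic chains. Your outline is indeed in the spirit of that argument, but there is a genuine gap in your treatment of part~(2), and a minor error.

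The minor error: Richardson varieties in $\Gr(r+1,d)$ are not smooth in general. They are normal with rational singularities, which is enough for irreducibility and reducedness, but you should not claim smoothness.

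The real gap is the case $\hat\rho = 0$ (i.e.\ $|\sigma|=1$) for $g=1$. In this case there is exactly one index $y$ with $\alpha_y + \beta_{r-y} = d-r$, and the transverse Richardson fiber over every $[L]\in U$ is \emph{empty}: the Schubert conditions force $\dim(V\cap F^p_{\alpha_y+y}) + \dim(V\cap F^q_{\beta_{r-y}+r-y}) \ge r+2$ inside an $(r+1)$-plane $V$, which is impossible when the flags are transverse. The entire variety $\Grdab(E,p,q)$ is then supported over the single bad point $[L]=[\mathcal{O}_E((\alpha_y+y)p + (\beta_{r-y}+r-y)q)]\in Z$, so your sentence ``Nonemptiness of some transverse Richardson fiber over $U$ is then equivalent to \ldots\ $\hat\rho\ge 0$'' is false, and your density-of-the-preimage-of-$U$ argument for irreducibility and reducedness does not apply. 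One needs a separate analysis of this fiber: the paper does exactly this in the proof of Proposition~\ref{prop:cohomology-of-grdab}, observing that every point contains the distinguished section $s$ spanning $F^p_{\alpha_y+y}\cap F^q_{\beta_{r-y}+r-y}$, so passing to the quotient by $\langle s\rangle$ identifies the fiber with a genuine Richardson variety (for transverse flags) in $\Gr(r,d-1)$. Without this, neither nonemptiness nor reducedness/irreducibility in the $\hat\rho=0$ case is established.
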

In fact, Theorem~\ref{thm:osserman-rhohat} is also proved by Osserman using  a degeneration to elliptic chains, so it is reasonable that our argument is closely related.  In fact, the analysis of the skew shape $\sigma(g,r,d,\alpha,\beta)$ captures the combinatorial part of the analysis in \cite{osserman-simple}.  Another proof of Theorem~\ref{thm:osserman-rhohat}(1) is given in \cite{anderson-chen-tarasca-k-classes} as well.

\begin{proposition}\label{prop:int-reduced}
Given nondecreasing sequences $\alpha_1,\beta_1,\alpha_2,\beta_2\in \ZZ_{\ge0}^{r+1}$, let
$$\alpha = \max \alpha_1,\alpha_2 \quad\text{and}\quad \beta=\max\beta_1,\beta_2.$$
Then
$$G^{r,\alpha_1,\beta_1}_d(X,p,q)\,\cap\, G^{r,\alpha_2,\beta_2}_d(X,p,q) = \Grdab(X,p,q).$$
In particular, under the hypotheses of Theorem~\ref{thm:osserman-rhohat}(2), every such intersection is a reduced scheme.
\end{proposition}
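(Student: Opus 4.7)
The plan is to establish set-theoretic equality, upgrade it scheme-theoretically by matching the Schubert-type defining ideals, and then invoke Theorem~\ref{thm:osserman-rhohat}(2) for reducedness.

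Fiberwise over $[L]\in\Pic^d E$, a subspace $V\subseteq H^0(L)$ satisfies the ramification condition at $p$ coming from both $\alpha_1$ and $\alpha_2$ if and only if it satisfies the condition for $\alpha = \max(\alpha_1,\alpha_2)$: for each index $j$, the two incidence conditions $\dim(V\cap H^0(L-(\alpha_i^j+j)p))\geq r+1-j$, $i=1,2$, are phrased relative to the same nested flag $\{H^0(L-mp)\}_m$, and the stronger of the two is exactly the one using $\max(\alpha_1^j,\alpha_2^j)=\alpha^j$. The analogous argument applies at $q$, yielding set-theoretic equality.

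To upgrade this to a scheme-theoretic identity, I would work inside the Grassmann bundle $\mathrm{Gr}(r+1,\mathcal{H})\to\Pic^d E$. Each $G^{r,\alpha_i,\beta_i}_d(E,p,q)$ is cut out by the determinantal ideals realizing these Schubert-type conditions with respect to the same two relative flags at $p$ and at $q$. The scheme-theoretic intersection is then cut out by the sum of these ideal sheaves, and by the standard Schubert-calculus fact that combining two Schubert conditions for a common flag yields a single Schubert condition for the componentwise max of the two partitions, this sum equals the defining ideal of $G^{r,\alpha,\beta}_d(E,p,q)$. The reducedness assertion now follows immediately from Theorem~\ref{thm:osserman-rhohat}(2) applied to the pair $(\alpha,\beta)$, which are nondecreasing as componentwise maxes of nondecreasing sequences.

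The main obstacle is the ideal-theoretic step: verifying that for two Schubert conditions relative to the same flag, the scheme-theoretic intersection (i.e.\ the sum of ideals) equals the Schubert scheme for the max. In a single Grassmannian this is standard, since the essential minors for the max partition all lie in the ideal sum and the remaining generators are redundant consequences; the same reasoning transfers to the relative setting over $\Pic^d E$ because all determinantal conditions in question are phrased with respect to the same two flags $\{H^0(L-mp)\}_m$ and $\{H^0(L-mq)\}_m$ over the entire base.
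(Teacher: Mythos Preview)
Your argument is correct. The paper's proof is considerably shorter: after noting that set-theoretic equality is clear, it invokes the functor-of-points description of $\Grdab(X,p,q)$ from \cite[Theorem 4.1.3]{osserman-book} and observes that the scheme-theoretic intersection and $\Grdab(X,p,q)$ represent the same moduli functor (linear series with ramification bounded below by $\alpha$ at $p$ and by $\beta$ at $q$), hence coincide. Your route via ideal sheaves is the concrete unwinding of this: the determinantal generators for the $\alpha_i$-condition at index $j$ are minors of a submatrix of those for the $\alpha$-condition (since the relevant flag is nested), so each $I_{\alpha_i}$ is contained in $I_\alpha$; conversely $\alpha^j$ equals one of $\alpha_1^j,\alpha_2^j$, so the generators of $I_\alpha$ already lie in $I_{\alpha_1}+I_{\alpha_2}$. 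This is exactly the ideal-theoretic content of the functor statement. The functor-of-points phrasing buys brevity and avoids any discussion of ``essential minors'' or redundancy; your phrasing has the advantage of being self-contained without appeal to the moduli-theoretic reference.
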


\begin{proof}
The equality is clear on the level of sets.  The scheme theoretic equality follows from the description of the functor of points of $\Grdab(X,p,q)$ in \cite[Theorem 4.1.3]{osserman-book}.
\end{proof}

We now specialize to the case where $X=E$ is a curve of genus 1, and $p,q\in E$ such that $p-q$ is not $d'$-torsion for any $d'\le d$.  Recall that there is a natural projection morphism
$$\pi\colon \Grdab(E,p,q)\to\Pic^d(E),$$
described on points by forgetting the sections of a linear series.



\begin{lemma} \label{lemma:when-surjective-to-Pic}
The morphism $\pi: G \rightarrow \Pic^d E$ is surjective if and only if $\hat{\rho} = 1$. Equivalently, this holds if and only if $\alpha_y + \beta_{r-y} < d-r$ for all $y$.
\end{lemma}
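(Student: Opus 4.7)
The plan is to reduce this to a Richardson variety calculation on the generic fiber of $\pi$ and then use properness to upgrade dominance to surjectivity. First I would handle the combinatorial equivalence: with $g=1$, the definition of $\hat\rho$ gives $\hat\rho = 1 - \sum_{y=0}^r \max(0, \alpha_y + \beta_{r-y} + 1 - d + r)$, and since each summand is a nonnegative integer, $\hat\rho = 1$ if and only if every summand vanishes, if and only if $\alpha_y + \beta_{r-y} \le d-r-1$ for every $y$, if and only if $\alpha_y + \beta_{r-y} < d-r$ for every $y$. This establishes the second ``equivalently'' in the lemma.

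Next I would analyze the fiber of $\pi$ over $L \in \Pic^d E$: it is the locus of $(r+1)$-dimensional $V \subseteq H^0(L)$ satisfying the two Schubert conditions $\dim V \cap H^0(L(-(\alpha_i+i)p)) \ge r+1-i$ at $p$ and $\dim V \cap H^0(L(-(\beta_i+i)q)) \ge r+1-i$ at $q$. Setting $F_p^j = H^0(L(-jp))$ and $F_q^j = H^0(L(-jq))$, transversality of the two complete flags in $H^0(L)$ amounts to $h^0(L(-jp-kq)) = \max(0, d-j-k)$ for all $j,k \ge 0$. Because $E$ has genus $1$ this fails only when the degree-zero line bundle $L(-jp-(d-j)q)$ is trivial, i.e. when $L \cong \cO_E(jp + (d-j)q)$ for some $j \in \{0,\ldots,d\}$; the hypothesis that $p-q$ is not $d'$-torsion for any $d' \le d$ makes these $d+1$ line bundles pairwise distinct, so the two flags are transverse complete flags in $H^0(L)$ away from a finite subset of $\Pic^d E$.

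Over that open transverse locus I would compute the fiber explicitly by choosing a basis $s_0,\ldots,s_{d-1}$ of $H^0(L)$ simultaneously diagonalizing the two flags, with $\ord_p s_i = i$ and $\ord_q s_i = d-1-i$. A coordinate subspace $V = \langle s_{i_0},\ldots,s_{i_r}\rangle$ with $i_0<\cdots<i_r$ has vanishing sequence $(i_0,\ldots,i_r)$ at $p$ and $(d-1-i_r,\ldots,d-1-i_0)$ at $q$, so it lies in the fiber exactly when $\alpha_k \le i_k - k \le d-r-1-\beta_{r-k}$ for each $k$. Such indices can be chosen (and then perturbed to be strictly increasing) whenever $\alpha_k + \beta_{r-k} \le d-r-1$ for all $k$, giving nonemptiness; and conversely if $\alpha_k + \beta_{r-k} \ge d-r$ for some $k$, a Schubert dimension count on the two opposite flags forces the fiber to be empty. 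Thus the generic fiber of $\pi$ is nonempty if and only if $\hat\rho = 1$.

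Finally, $\pi$ is proper since $\Grdab(E,p,q)$ is closed in the proper bundle $\Gr(r+1,\mathcal{H}) \to \Pic^d E$, so $\pi(G)$ is a closed subset of the irreducible $1$-dimensional variety $\Pic^d E$. Consequently $\pi$ is surjective if and only if its image contains the generic point, if and only if the generic fiber is nonempty; combined with the two steps above this yields the stated equivalence. The main obstacle I anticipate is step three: correctly identifying the Richardson-nonemptiness inequality in $\Gr(r+1,d)$ on the nose (it is tempting but wrong to use only the overall codimension bound $|\alpha|+|\beta| \le (r+1)(d-r-1)$), but the adapted basis argument gives it cleanly and matches the combinatorial condition from step one. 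Everything else is formal.
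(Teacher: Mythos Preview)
Your argument is correct. The paper itself does not give a proof here; it simply says the result is implicit in the proof of \cite[Lemma~2.1]{osserman-simple}. Your write-up is a self-contained version of precisely that implicit argument: identify the fiber over a general $L$ as a Richardson variety in $\Gr(r+1,d)$ for transverse flags, use the adapted basis to see that this Richardson variety is nonempty exactly when $\alpha_k+\beta_{r-k}\le d-r-1$ for all $k$, and then invoke properness of $\pi$ and irreducibility of $\Pic^d E$ to pass from ``general fiber nonempty'' to ``surjective.'' One cosmetic remark: the parenthetical ``and then perturbed to be strictly increasing'' is unnecessary, since the choice $i_k=\alpha_k+k$ is already strictly increasing by the monotonicity of $\alpha$.
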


\begin{proof}
This is implicit in the proof of Lemma 2.1 in \cite{osserman-simple}.
\end{proof}

\begin{proposition} \label{prop:cohomology-of-grdab} Write $G=\Grdab(E,p,q)$ for short. The morphism $\pi \colon G\to \Pic^d E$ is either surjective or has image a point.  Moreover:
\enumnow{
\item
In the first case, we have 
$$H^i(G,\cO_G)\cong\begin{cases}
k &\text{if }i=0,1,\\
0 &\text{else.}
\end{cases}$$
In particular if $\pi$ is surjective then $\chi(G) = 0$.
\item
In the second case, we have 
$$H^i(G,\cO_G)\cong\begin{cases}
k &\text{if }i=0,\\
0 &\text{else.}
\end{cases}$$
In particular if $\pi$ has image a point then $\chi(G) = 1$.
}\end{proposition}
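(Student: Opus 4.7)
The plan is to first obtain the dichotomy from irreducibility and a dimension count, and then invoke the paper's theorem on relative flag Richardson varieties to reduce each cohomology computation to that of the base.

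For the dichotomy, note that since $E$ has genus $1$, the target $\Pic^d E$ is an elliptic curve, hence irreducible of dimension $1$. By Theorem~\ref{thm:osserman-rhohat}(2), $G$ is irreducible, so $\pi(G)$ is an irreducible closed subset of $\Pic^d E$ and is therefore either all of $\Pic^d E$ or a single closed point. By Lemma~\ref{lemma:when-surjective-to-Pic}, these two cases correspond exactly to $\hat\rho = 1$ and $\hat\rho = 0$; no other cases arise because $\hat\rho = g - |\sigma| \in \{0,1\}$ when $g=1$ and $G$ is nonempty.

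For the cohomology computations, I would realize $\pi \col G \to \pi(G)$ as a relative flag Richardson variety. Using the rank $d$ bundle $\cH$ on $\Pic^d E$, the scheme $G$ sits inside $\Gr(r+1, \cH) \to \Pic^d E$ as the intersection of the two relative Schubert conditions coming from the complete flags of vanishing at $p$ and at $q$ (with partitions read off from $\alpha$ and $\beta$). These two flags are transverse over a dense open subset of $\Pic^d E$, failing to be so only at the at most $d+1$ exceptional line bundles mentioned in the introduction. One then applies the paper's theorem that the structure sheaf cohomology of such a (possibly almost-transverse) relative flag Richardson variety agrees with that of its base, giving $R\pi_*\cO_G \cong \cO_{\pi(G)}$. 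The Leray spectral sequence collapses to $H^i(G,\cO_G) \cong H^i(\pi(G), \cO_{\pi(G)})$. In Case~(1) the base is the elliptic curve $\Pic^d E$, with $h^0 = h^1 = 1$ and no higher cohomology, so $\chi(G) = 0$; in Case~(2) the base is a reduced point, giving $h^0 = 1$ and higher cohomology zero, so $\chi(G) = 1$.

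The main obstacle is verifying that the relative flag Richardson theorem applies at the finitely many exceptional line bundles where the vanishing flags at $p$ and $q$ fail to be transverse. This almost-transverse behavior is precisely what the general cohomological theorem of the paper is designed to accommodate, and is the main technical content of that earlier section; once it is in hand, the remainder of the present proposition is a short cohomology-of-the-base calculation.
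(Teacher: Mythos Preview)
Your approach to Case~(1) is exactly the paper's: realize $G$ as a relative flag Richardson variety over $S=\Pic^d E$ and invoke Theorem~\ref{thm:rab-good}. The crucial hypothesis there is \emph{coherence} of the pair $(A_\bullet,B_\bullet)$, which by Corollary~\ref{cor:cs=surj} is equivalent to surjectivity of $\pi$, i.e.\ to $\hat\rho=1$. So far so good.

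The gap is in Case~(2). You propose to apply the same theorem with base $\pi(G)$, a single point, but this fails on two counts. First, Theorem~\ref{thm:rab-good} requires coherence, and coherence is \emph{equivalent} to surjectivity; when $\hat\rho=0$ the pair $(A_\bullet,B_\bullet)$ is not coherent, so the theorem simply does not apply. Second, the setup of Definition~\ref{def:rab} requires the two flags to be transverse over a dense open subset of the base. When the image is a point, that point is precisely one of the line bundles $\cO(a_i p + b_{r-i}q)$ where the vanishing flags are \emph{almost} transverse (Example~\ref{ex:g-a-special-case}), so over $S=\{\mathrm{pt}\}$ there is no open locus of transversality at all. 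The ``almost transverse'' allowance in the theorem is designed for bad points sitting inside a good base, not for a base consisting entirely of a bad point.

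The paper's remedy is a quotient trick you are missing: in Case~(2) there is a unique index $i$ with $\alpha_i+\beta_{r-i}=d-r$, and every $(r{+}1)$-plane in $G$ contains the one-dimensional space $\langle s\rangle = P^{a_i}\cap Q^{b_{r-i}}$. Passing to the quotient $V/\langle s\rangle$ identifies $G$ with an intersection of Schubert varieties in $\Gr(r,V/\langle s\rangle)$ with respect to flags that \emph{are} transverse. Now $G$ is an honest Richardson variety over $\Spec k$, the pair is coherent, and Theorem~\ref{thm:rab-good} gives the result.
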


\begin{proof}
This is proved in \cite{chan-pflueger-relative}; we summarize the main points as follows. 

Given a rank-$d$ vector bundle $\cH$ over a smooth irreducible base scheme $S$, two complete flags $\cpb$ and $\cqb$ in $\cH$ are called {\em versal} if the map $\Fr(\cH)\to \Fl(d)^2$ that they define, where $\Fr(\cH)$ denotes the frame bundle, is a smooth morphism. One may consider the relative partial flag variety $\Fl(i_0, \ldots, i_s; \cH)$ parameterizing flags of subspaces of codimension $i_0, \ldots, i_s$. Within this relative flag variety are relative Schubert varieties, defined in terms of either $\cpb$ or $\cqb$. An intersection of two relative Schubert varieties inside a relative partial flag variety $\Fl(i_0,\ldots,i_s;\cH)$, defined with respect to $\cpb$ and $\cqb$ respectively, is called a {\em relative Richardson variety.}  In our situation, $\cH$ is the rank-$d$ vector bundle over $\Pic^d(E)$ whose fiber over $[L]$ is canonically identified with $H^0(E,L)$, and $\cpb$ and $\cqb$ are the flags determined by vanishing orders at $p$ and $q$, respectively.

Of the many geometric properties enjoyed by a relative Richardson variety $\pi\colon R\to S$, as proved in \cite{chan-pflueger-relative}, one of them is that
$H^i(R,\cO_R) \cong H^i(S',\cO_{S'})$ for all $i\ge0$, where $S' = \pi(R)$ is the scheme-theoretic image of $R$ in $S$.  In particular, the Proposition follows.
\end{proof}

\section{Limit linear series on an elliptic chain}

The objective of this section is to analyze the scheme structure of the Eisenbud-Harris scheme of limit linear series $\Grdab(X,p,q)$, where $(X,p,q)$ is a generic twice-marked elliptic chain (as defined in \ref{sec:prelim-lls}), and to deduce Theorem \ref{thm:main} from this analysis.

The main results about the structure of $\Grdab(X,p,q)$ for an elliptic chain are the following; these will be proved in \S\ref{ss:ehscheme-proofs} after preliminary results are established in \S\ref{ss:eh-strat} and \S\ref{sec:pontableaux}.

The first of these, Theorem \ref{thm:grdab-chain-structure}, is not new. It follows, for example, from the proof of Theorem 1.1 in \cite{osserman-simple}; that proof uses an inductive argument in which an elliptic curve is attached to a genus $g-1$ smooth curve, which adapts readily to constructing a chain of $g$ elliptic curves. But we are not aware of a reference stating this fact specifically for an elliptic chain, so we state the result for convenience and will provide a brief proof in our notation. 

\begin{thm} \label{thm:grdab-chain-structure}
Let $(X,p,q)$ be a generic twice-marked elliptic chain, as defined in \S\ref{sec:prelim-lls}. Then the Eisenbud-Harris scheme $\Grdab(X,p,q)$ is nonempty if and only if $\hat{\rho} \geq 0$. If nonempty, the Eisenbud-Harris scheme is reduced of dimension $\rho$, and the locus of refined series is dense. (Here $\rho,\hat{\rho}$ are as in \S \ref{ss:notation}.)
\end{thm}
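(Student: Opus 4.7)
The plan is to stratify $\Grdab(X,p,q)$ by the refined ramification profiles $(\alpha^\bullet,\beta^\bullet) = (\alpha^i, \beta^i)_{i=1}^g$ of Definition~\ref{def:ehscheme}, analyze each product stratum
\[
Y(\alpha^\bullet,\beta^\bullet) := \prod_{i=1}^g G^{r,\alpha^i,\beta^i}_d(E_i,p_i,q_i)
\]
one aspect at a time using Theorem~\ref{thm:osserman-rhohat}, and glue using Proposition~\ref{prop:int-reduced}. Since $(X,p,q)$ is generic, each $p_i - q_i$ is non-torsion in $\Pic^0(E_i)$, so Theorem~\ref{thm:osserman-rhohat}(2) applies to every factor: $G^{r,\alpha^i,\beta^i}_d(E_i,p_i,q_i)$ is nonempty iff the local invariant $\hat\rho_i := 1 - |\sigma(1,r,d,\alpha^i,\beta^i)| \ge 0$, and in that case it is reduced, irreducible, of dimension $\rho_i = 1 + (r+1)(d-r-1) - \sum_j (\alpha^i_j + \beta^i_j)$. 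Consequently each nonempty $Y(\alpha^\bullet,\beta^\bullet)$ is reduced and irreducible of dimension $\sum_i \rho_i$.

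For refined profiles, the equality $\beta^i_j + \alpha^{i+1}_{r-j} = d-r$ telescopes to give
\[
\sum_{i=1}^g \sum_{j=0}^r (\alpha^i_j + \beta^i_j) \;=\; \sum_j (\alpha_j + \beta_j) + (g-1)(r+1)(d-r),
\]
which when substituted into $\sum_i \rho_i$ yields exactly $\rho = g - (r+1)(g-d+r) - \sum \alpha - \sum \beta$. For coarse profiles some inequality is strict, so $\sum_i \rho_i < \rho$ and such strata have strictly smaller dimension. Moreover, any coarse profile may be obtained from some refined one by strengthening the ramification on one side of a node, so coarse $Y$-strata are contained in refined $Y$-strata; this gives density of refined series once nonemptiness is established.

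The main obstacle is the nonemptiness criterion, i.e.\ showing that some refined profile with all $\hat\rho_i \ge 0$ exists iff $\hat\rho = g - |\sigma| \ge 0$. Under the refined relations $\beta^i_{r-j} = d-r-\alpha^{i+1}_j$, specifying $(\alpha^1 = \alpha, \alpha^2, \ldots, \alpha^g)$ determines $\beta^\bullet$, and the required value $\beta^g = \beta$ becomes an endpoint constraint. The local bound $\hat\rho_i \ge 0$ means $|\sigma(1,r,d,\alpha^i,\beta^i)| \le 1$, so the ramification can change by at most ``one box of $\sigma$'' between consecutive elliptic components; an admissible sequence of length $g$ thus exists iff all $|\sigma|$ boxes can be accommodated, namely iff $|\sigma| \le g$. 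I expect this matching---via an explicit bijection between admissible refined profiles and combinatorial fillings of $\sigma$, using the pontableau formalism developed elsewhere in the paper---to be the technical heart of the proof.

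Once nonemptiness is in hand, reducedness of the full Eisenbud--Harris scheme follows: each $Y(\alpha^\bullet,\beta^\bullet)$ is reduced as a product of reduced factors, and Proposition~\ref{prop:int-reduced} identifies any pairwise intersection with another $Y$-stratum (having componentwise maximum ramification) which is again reduced. A finite union of reduced subschemes with reduced pairwise intersections is reduced, so $\Grdab(X,p,q)$ is reduced. Combined with the dimension count above, this completes the structural description.
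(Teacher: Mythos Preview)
Your proposal is correct and matches the paper's approach closely. The paper packages the factor-by-factor analysis into Lemmas~\ref{lemma:vsunion}, \ref{lemma:cab-attributes}, and \ref{lemma:ct-attributes} (which record exactly your dimension telescoping, the containment of coarse strata in refined ones, and the reducedness of each product), and then handles nonemptiness via the pontableau formalism just as you anticipate; your direct sketch traces the same argument. One minor remark: your clause ``with reduced pairwise intersections'' is superfluous, since a finite scheme-theoretic union of reduced closed subschemes is automatically reduced (an intersection of radical ideals is radical).
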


\begin{thm} \label{thm:grdab-chain-euler}
Let $\Grdab(X,p,q)$ be as in Theorem \ref{thm:grdab-chain-structure}. Then
$$
\chi(\Grdab(X,p,q)) = (-1)^{g-|\sigma|} \cdot \# \left(\mbox{standard set-valued tableaux on $\sigma$ of content } \{1,\ldots,g\}\right)
$$
\end{thm}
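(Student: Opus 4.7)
The plan is to compute $\chi(\Grdab(X,p,q))$ by combining the cut-and-paste property of the Euler characteristic (Proposition~\ref{prop:cut-n-paste}) with the Eisenbud-Harris decomposition (Definition~\ref{def:ehscheme}), reducing to a signed sum over strata each of which is a product of elliptic Brill-Noether varieties, and then applying Proposition~\ref{prop:cohomology-of-grdab} and K\"unneth to each term.

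First, note that $\Grdab(X,p,q)$ is a finite union of closed strata $Z_{(\alpha^\bullet,\beta^\bullet)} := \prod_{i=1}^g G_d^{r,\alpha^i,\beta^i}(E_i,p_i,q_i)$ indexed by valid refinement data. By Proposition~\ref{prop:int-reduced}, scheme-theoretic intersections of these products are again of this form (with ramifications raised coordinatewise), so the closed strata form a finite meet-semilattice. Iterated application of Proposition~\ref{prop:cut-n-paste}, equivalently M\"obius inversion on the semilattice, yields
\[
\chi(\Grdab(X,p,q)) \;=\; \sum_{(\alpha^\bullet,\beta^\bullet)} m(\alpha^\bullet,\beta^\bullet)\,\chi\bigl(Z_{(\alpha^\bullet,\beta^\bullet)}\bigr)
\]
for integer coefficients $m(\alpha^\bullet,\beta^\bullet)$, and by K\"unneth each $\chi(Z_{(\alpha^\bullet,\beta^\bullet)})$ factors as $\prod_i \chi\bigl(G_d^{r,\alpha^i,\beta^i}(E_i,p_i,q_i)\bigr)$.

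Second, I would apply Proposition~\ref{prop:cohomology-of-grdab} to each elliptic factor: it contributes $1$ if the projection to $\Pic^d E_i$ is a point, and $0$ if surjective. Using Lemma~\ref{lemma:when-surjective-to-Pic}, the ``point'' case occurs exactly when there is a unique index $y_i\in\{0,\ldots,r\}$ with $\alpha^i_{y_i}+\beta^i_{r-y_i}=d-r$, all other indices satisfying strict inequality. Thus only those ramification data for which every elliptic factor has a unique tight index $y_i$ survive, and each such survivor contributes exactly $m(\alpha^\bullet,\beta^\bullet)$ to the sum.

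Third, the surviving data $((\alpha^i,\beta^i),y_i)_{i=1}^g$ are to be encoded as a filling of the skew shape $\sigma$: assign the label $i$ to a specific box in row $y_i$, with column determined by $\alpha^i_{y_i}$. The gluing inequalities $\beta^i_j+\alpha^{i+1}_{r-j}\ge d-r$ then translate directly into semistandardness (weakly increasing rows, strictly increasing columns), and ``coarser'' data in which several indices $i$ are forced to occupy a common box correspond to set-valued entries. The M\"obius coefficient $m(\alpha^\bullet,\beta^\bullet)$ should collapse uniformly to $(-1)^{g-|\sigma|}$: each extra label beyond the first within a box contributes a factor of $-1$, and there are exactly $g-|\sigma|$ such extras for any filling with $g$ labels on $|\sigma|$ boxes. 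Summing yields, up to the uniform sign, the count of standard set-valued tableaux on $\sigma$ of content $\{1,\ldots,g\}$.

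The main obstacle is the combinatorial bookkeeping in the third step: setting up a clean bijection between contributing strata and standard set-valued tableaux while simultaneously verifying that the M\"obius coefficients from the semilattice of strata agree with the uniform sign $(-1)^{g-|\sigma|}$. This is precisely what the \emph{pontableaux} framework announced in the introduction is designed to accomplish; once pontableaux are set up as a combinatorial parametrization of the strata that records both the refinement type and its M\"obius contribution, the cancellation argument reduces the signed stratum sum to the signed count of standard set-valued tableaux claimed in the statement.
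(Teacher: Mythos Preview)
Your proposal follows the paper's approach exactly: stratify $\Grdab(X,p,q)$ by the loci $C(\bs\alpha,\bs\beta)$, apply Proposition~\ref{prop:cut-n-paste}, evaluate each stratum's Euler characteristic via Proposition~\ref{prop:cohomology-of-grdab} and Lemma~\ref{lemma:when-surjective-to-Pic} (packaged in the paper as Lemma~\ref{lemma:cab-attributes}(5)), and then reduce the resulting signed sum to set-valued tableaux using the pontableau formalism (Theorem~\ref{prop:ptmuchi}).

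One correction to your heuristic in the third step: the M\"obius coefficient of an individual stratum is \emph{not} uniformly $(-1)^{g-|\sigma|}$. In the paper's parametrization, $\mu(P)$ equals $(-1)^L$ where $L$ is the number of \emph{left-removals} in the pontableau $P$ (or $0$ if two identical left-removals are adjacent); it has nothing a priori to do with extra labels per box. The uniform sign $(-1)^{g-|\sigma|}$ emerges only after grouping pontableaux by their underlying set-valued tableau and summing: Lemma~\ref{lemma:sumGivenTab} shows, by a nontrivial induction on $g$, that $\sum_{P:\,\tab(P)=t}\mu(P)=(-1)^{N-|\sigma|}$ when $t$ is almost-standard with $N$ symbols, and vanishes otherwise. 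So your instinct that ``each extra label contributes a $-1$'' is the right \emph{outcome}, but it is not visible at the level of individual strata --- several pontableaux (with varying numbers of left-removals) share each set-valued tableau, and the cancellation among them is the substantive combinatorial work you correctly flag as the main obstacle.
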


In order to deduce Theorem \ref{thm:main} from Theorems \ref{thm:grdab-chain-structure} and \ref{thm:grdab-chain-euler}, we will consider a smoothing of $X$ to a smooth curve of genus $g$, and make use of results from the theory of limit linear series. This is carried out in \S\ref{ss:mainthm-proof}.

\subsection{The stratification of the Eisenbud-Harris space} \label{ss:eh-strat}

Throughout this subsection, fix data $(g,r,d,\alpha,\beta)$, and let $(X,p,q)$ be a generic twice-marked elliptic chain of genus $g$, as defined in \S\ref{sec:prelim-lls}. We will describe the irreducible components of the Eisenbud-Harris scheme $\Grdab(X,p,q)$. We begin by reviewing some definitions and facts from \cite{clpt}.

Let $\bs \alpha = (\alpha^1,\cdots,\alpha^{g+1})$ denote a $(g+1)$-tuple of nondecreasing sequences of $r+1$ integers. We call $\bs \alpha$ a \emph{valid sequence for the data $(g,r,d,\alpha,\beta)$} if the following three conditions hold (\cite[Definition 3.8]{clpt}).
\begin{enumerate}
\item For $i=0,\cdots,r$, $\alpha^1_i = \alpha_i$.
\item For $i=0,\cdots,r$, $\alpha^{g+1}_i = d-r-\beta_{r-i}$.
\item For $n=1,\cdots,g$ and $i=0,\cdots,r$,
$$
\alpha^{n+1}_j \geq \alpha^n_j,
$$
where, for any particular value of $n$, equality holds for at most one value of $j$ (which may be different for different values of $n$).
\end{enumerate}

The set of all valid sequences for the data $(g,r,d,\alpha,\beta)$ is denoted $\vs(g,r,d,\alpha,\beta)$.

Given a valid sequence $\bs \alpha$, define the \emph{complementary sequence} $\bs \beta = (\beta^0,\cdots,\beta^g)$ by
$$
\beta^n_i = d-r - \alpha^{n+1}_{r-i}.
$$
As in \cite[Definition 4.6]{clpt}, define
$$
C({\bs \alpha}) = \prod_{n=1}^g G^{r,\alpha^n,\beta^n}_{d}(E_n,p_n,q_n) \subseteq \prod_{n=1}^g G^r_d(E_n).
$$

The definition of the complementary sequence guarantees that any point $(L_1,\cdots,L_g) \in C({\bs \alpha})$ may be regarded as the aspects of a limit linear series, and the first two conditions in the definition of a valid sequence ensure that this limit linear series lies in $\Grdab(X,p,q)$. In fact, the loci $C({\bs \alpha})$ constitute a decomposition of $\Grdab(X,p,q)$ into irreducible components.

\begin{lemma}{\cite[Corollary 4.8]{clpt}} \label{lemma:vsunion}
For any choice of data $(g,r,d,\alpha,\beta)$ and generic twice-marked elliptic chain $(E,p,q)$ of genus $g$,
$$\Grdab(X,p,q) = \bigcup_{{\bs \alpha} \in \vs(g,r,d,\alpha,\beta)} C({\bs \alpha}).$$
Furthermore, the locus of refined limit linear series is equal to the set of points lying in just one of the schemes $C(\bs \alpha)$.
\end{lemma}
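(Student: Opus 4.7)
The plan is to establish the set-theoretic equality by a double inclusion, and then to deduce the refinement characterization from the rigidity of the construction in the refined case.

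For $\supseteq$: Given a valid sequence $\bs\alpha \in \vs(g,r,d,\alpha,\beta)$ with complementary sequence $\bs\beta$, the definition $\beta^n_j = d-r-\alpha^{n+1}_{r-j}$ gives $\beta^n_j + \alpha^{n+1}_{r-j} = d-r$ for every $n,j$, together with $\alpha^1 = \alpha$ and $\beta^g = \beta$. Thus $(\alpha^n,\beta^n)_{n=1}^g$ is an admissible tuple in Definition~\ref{def:ehscheme}, so $C(\bs\alpha) \subseteq \Grdab(X,p,q)$.

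For $\subseteq$: Fix a closed point $x = (L_1,\ldots,L_g) \in \Grdab(X,p,q)$, and let $\wt\alpha^n, \wt\beta^n$ denote the true vanishing sequences of $L_n$ at $p_n,q_n$. These satisfy $\wt\alpha^1 \ge \alpha$, $\wt\beta^g \ge \beta$, and the Eisenbud--Harris compatibility $\wt\beta^n_j + \wt\alpha^{n+1}_{r-j} \ge d-r$ at every internal node. I would construct a valid sequence $\bs\alpha$ with $x \in C(\bs\alpha)$ inductively, setting $\alpha^1 = \alpha$ and choosing each $\alpha^{n+1}_j$ in the range
\begin{equation*}
\max\bigl(\alpha^n_j,\; d-r-\wt\beta^n_{r-j}\bigr) \;\le\; \alpha^{n+1}_j \;\le\; \wt\alpha^{n+1}_j.
\end{equation*}
Nonemptiness of this range follows from the compatibility inequality together with an inductive estimate relating $\alpha^n$ to $\wt\alpha^n$ on the preceding component. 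To enforce the valid-sequence condition that $\alpha^{n+1}_j = \alpha^n_j$ holds for at most one $j$ per step, any excess slack is broken by strictly incrementing the redundant coordinates within the allowed range.

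For the refinement characterization: when $x$ is refined the compatibility inequalities are all equalities, so $d-r-\wt\beta^n_{r-j} = \wt\alpha^{n+1}_j$ and the two bounds in the display coincide, forcing $\alpha^{n+1}_j$ at every $j$; hence $x$ belongs to a unique $C(\bs\alpha)$. Conversely, if $x$ is coarse, then at some node $n$ and some index $j$ the upper and lower bounds differ, providing at least two distinct valid completions of $\bs\alpha$ and hence at least two of the schemes $C(\bs\alpha)$ containing $x$. The main obstacle is the inductive verification that the at-most-one-equality condition can always be arranged; this is a combinatorial matter hinging on the genericity of the elliptic chain, since the nondegeneracy hypothesis that $p_i - q_i$ be non-torsion, via Theorem~\ref{thm:osserman-rhohat}(2), forbids the simultaneous collisions in the vanishing sequences that would otherwise obstruct the construction.
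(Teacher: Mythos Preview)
The paper does not supply its own proof of this lemma: it is quoted verbatim from \cite[Corollary 4.8]{clpt} and used as a black box. So there is no argument in the present paper to compare against. Your outline is in the right spirit, and the $\supseteq$ inclusion and the ``refined $\Rightarrow$ unique'' direction are essentially correct as written.

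That said, two points in your sketch would need real work before this is a proof. First, the inductive construction for $\subseteq$ never mentions the terminal constraint $\alpha^{g+1}_j = d-r-\beta_{r-j}$: you build $\alpha^{n+1}$ from below with upper bound $\wt\alpha^{n+1}$, but at the last step $\alpha^{g+1}$ is prescribed, not free, so you must check that your running choices land there. This does follow from the inductive bound $\alpha^n_j\le\wt\alpha^n_j$ together with $\wt\alpha^g_j + \wt\beta^g_{r-j}\le d-r$ and $\wt\beta^g\ge\beta$, but it should be said. Second, the converse in the refinement characterization is too quick. Coarseness gives $\wt\alpha^{n+1}_j > d-r-\wt\beta^n_{r-j}$ at some node, but your displayed interval has lower bound $\max(\alpha^n_j,\,d-r-\wt\beta^n_{r-j})$; when that maximum is $\alpha^n_j$, slack in the compatibility inequality alone does not give two admissible values of $\alpha^{n+1}_j$. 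You need to argue that a coarse point admits two \emph{globally} valid sequences, not merely local slack at one step, and this takes an additional observation (for instance, that one may perturb $\bs\alpha$ at the offending node and propagate the change without breaking the at-most-one-equality condition elsewhere). Also unaddressed is why each $\alpha^{n+1}$ produced by your procedure remains nondecreasing in $j$.
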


It is necessary for our purposes to enumerate all intersections of the loci $C(\bs \alpha)$ as well. To do so, we introduce the following terminology. Let $\bs \alpha$ be a valid sequence. A sequence $\bs \beta = (\beta^1,\cdots,\beta^g)$ of nondecreasing $(r+1)$-tuples is called a \emph{compatible sequence} for $\bs \alpha$ if for all $n,i$,
$$
\beta^n_i \geq d-r-\alpha^{n+1}_{r-i}.
$$
For a valid sequence $\bs \alpha$ and compatible sequence $\bs \beta$, define
$$
C(\bs \alpha, \bs \beta) = \prod_{n=1}^g G^{r,\alpha^n,\beta^n}_{d}(E_n,p_n,q_n) \subseteq \prod_{n=1}^g G^r_d(E_n).
$$
The locus $C(\bs \alpha)$ is a special case, in which $\bs \beta$ is taken to be the complementary sequence. The loci $C(\bs \alpha, \bs \beta)$ include all intersections of any set of loci $C(\bs \alpha)$, due to the following.

\begin{lemma} \label{lem:cab-intersections}
For valid sequences $\bs \alpha, \bs \alpha'$ and sequences $\bs \beta, \bs \beta'$ compatible with them (respectively),
$$
C(\bs \alpha, \bs \beta) \cap C(\bs \alpha', \bs \beta') =
C(\max(\bs \alpha,\bs \alpha'), \max(\bs \beta, \bs \beta')),
$$
scheme-theoretically. Here by $\max(\bs \alpha,\bs \alpha')$ the sequence of $(r+1)$-tuples formed by taking the maximum of each element of each sequence individually.
\end{lemma}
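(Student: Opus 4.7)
The plan is to reduce the intersection to a product of factorwise intersections, and then apply Proposition~\ref{prop:int-reduced} component by component. The statement is really the composition of two observations, one formal and one already proved.

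First, both $C(\bs\alpha,\bs\beta)$ and $C(\bs\alpha',\bs\beta')$ sit as closed subschemes of the same ambient product $\prod_{n=1}^g G^r_d(E_n)$, and each has the product form $\prod_n Z_n$ with $Z_n\subseteq G^r_d(E_n)$. I would begin by invoking the elementary fact that scheme-theoretic intersection distributes over Cartesian products: for closed subschemes $A_n,B_n\subseteq Y_n$ one has
$$
\Bigl(\prod_n A_n\Bigr)\cap\Bigl(\prod_n B_n\Bigr)\;=\;\prod_n(A_n\cap B_n)
$$
inside $\prod_n Y_n$. Locally this is just the identity that sum of ideals commutes with tensor product. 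Applied in our setting, this rewrites $C(\bs\alpha,\bs\beta)\cap C(\bs\alpha',\bs\beta')$ as the product over $n$ of the pairwise intersections
$$
G^{r,\alpha^n,\beta^n}_d(E_n,p_n,q_n)\;\cap\;G^{r,(\alpha')^n,(\beta')^n}_d(E_n,p_n,q_n).
$$

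Next, I would invoke Proposition~\ref{prop:int-reduced} on each elliptic factor. Since $(X,p,q)$ is a \emph{generic} twice-marked elliptic chain, the difference $p_n-q_n$ is non-torsion in $\Pic^0(E_n)$, so the hypotheses of Proposition~\ref{prop:int-reduced} are satisfied and each factor intersection equals
$$
G^{r,\max(\alpha^n,(\alpha')^n),\,\max(\beta^n,(\beta')^n)}_d(E_n,p_n,q_n)
$$
scheme-theoretically. Reassembling the product over $n=1,\dots,g$ identifies the result with $C(\max(\bs\alpha,\bs\alpha'),\max(\bs\beta,\bs\beta'))$, as required.

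I do not anticipate any real obstacle here: all the scheme-theoretic content is already encapsulated in Proposition~\ref{prop:int-reduced}, which was established earlier via the moduli-functor description of $\Grdab$ from \cite{osserman-book}. The remaining work is just bookkeeping — verifying distributivity of intersection over products of closed subschemes, and checking that the non-torsion hypothesis on each $E_n$ indeed puts us in the scope of Proposition~\ref{prop:int-reduced}. Note that no validity or compatibility hypothesis on the max sequences is needed for the statement, since $C(-,-)$ is defined as a product of Eisenbud–Harris schemes $G^{r,-,-}_d(E_n,p_n,q_n)$ for any ramification profiles whatsoever.
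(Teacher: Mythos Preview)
Your proposal is correct and takes essentially the same approach as the paper: the paper's proof is the one-line sentence ``This follows from Proposition~\ref{prop:int-reduced}, applied to each factor individually in the definition of $C(\bs\alpha)$,'' and you have simply spelled out what that means (distributivity of intersection over products, then Proposition~\ref{prop:int-reduced} on each elliptic factor). Your closing observation that the $\max$ sequences need not themselves be valid/compatible also matches the paper's remark immediately following the lemma.
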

\begin{proof}
This follows from Proposition \ref{prop:int-reduced}, applied to each factor individually in the definition of $C(\bs \alpha)$. 
\end{proof}

Note that in Lemma \ref{lem:cab-intersections}, it is not necessarily true that $\max(\bs \alpha, \bs \alpha')$ is again a valid sequence; if not, the intersection will be empty.

The attributes of the loci $C(\bs \alpha, \bs \beta)$ needed in our analysis are summarized as follows.

\begin{lemma} \label{lemma:cab-attributes}
Let $\bs \alpha$ be a valid sequence for data $(g,r,d,\alpha,\beta)$, and let $\bs \beta$ be a compatible sequence.
\begin{enumerate}
\item $C(\bs \alpha, \bs \beta)$ is nonempty if and only if for all $n \in \{1,2,\cdots,g\}$, $i \in \{0,\cdots,r\}$,
$$
\alpha^n_i + \beta^n_{r-i} \leq d-r,
$$
with equality for at most one value of $i$ per value of $n$.
\item If $C(\bs \alpha, \bs \beta)$ is nonempty, then it is reduced and equidimensional with
$$
\dim C(\bs \alpha, \bs \beta) = \rho - \sum_{n=1}^{g-1} \sum_{i=0}^r \left( \beta^n_i + \alpha^{n+1}_{r-i} - (d-r) \right).
$$
\item If $C(\bs \alpha, \bs \beta)$ and $C(\bs \alpha', \bs \beta')$ are nonempty, then the containment $C(\bs \alpha, \bs \beta) \subseteq C(\bs \alpha', \bs \beta')$ holds if and only if for all $n,i$, $\alpha^n_i \geq \alpha'^n_i$ and $\beta^n_i \geq \beta'^n_i$.
\item If $\bs \beta$ is complementary to $\bs \alpha$, then a dense open subset of $C(\bs \alpha, \bs \beta)$ consists of refined limit linear series. Otherwise, all points correspond to coarse series.
\item The Euler characteristic of the structure sheaf is given by
$$
\chi \left( C(\bs \alpha, \bs \beta) \right) = \begin{cases}
1 & \mbox{ if for all $n$, there is some $i$ with equality } \alpha^n_i + \beta^n_{r-i} = d-r,\\
0 & \mbox{ otherwise.}
\end{cases}
$$
\end{enumerate}
\end{lemma}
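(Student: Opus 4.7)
My plan is to reduce each part of the lemma to a statement about a single factor $G^{r,\alpha^n,\beta^n}_d(E_n,p_n,q_n)$, using that $C(\bs\alpha,\bs\beta)$ is by definition a product over $n=1,\ldots,g$. The main inputs are Theorem~\ref{thm:osserman-rhohat}(2), Proposition~\ref{prop:int-reduced}, Lemma~\ref{lemma:when-surjective-to-Pic}, and Proposition~\ref{prop:cohomology-of-grdab}, all of which concern a single twice-pointed elliptic curve.

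For parts (1) and (2), apply Theorem~\ref{thm:osserman-rhohat}(2) to each factor. Specializing the formula for $\hat\rho$ to genus $1$ gives $\hat\rho_n = 1 - \sum_i \max(0,\alpha^n_i + \beta^n_{r-i} - (d-r-1))$, which is nonnegative exactly when at most one $i$ has $\alpha^n_i + \beta^n_{r-i} \ge d-r$ and in that case the inequality is equality; this gives (1). When nonempty, each factor is reduced and irreducible of dimension $\rho_n = 1 - (r+1)(r-d+1) - \sum_i(\alpha^n_i + \beta^n_i)$, so $C(\bs\alpha,\bs\beta)$ is reduced and equidimensional of dimension $\sum_n \rho_n$. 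Routine bookkeeping, using the telescoping $\sum_{n=1}^g \alpha^n_i = \alpha^1_i + \sum_{n=2}^g \alpha^n_i$ and writing each cross-term $\beta^n_i + \alpha^{n+1}_{r-i}$ as $(d-r)$ plus its excess, rearranges $\sum_n \rho_n$ into $\rho - \sum_{n=1}^{g-1}\sum_i (\beta^n_i + \alpha^{n+1}_{r-i} - (d-r))$ as claimed.

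Part (3) follows from the functor-of-points description of ramification loci used in Proposition~\ref{prop:int-reduced}: strengthening the ramification bounds produces a scheme-theoretically closed subscheme, giving one direction. For the converse, I would restrict to the dense open subset of $C(\bs\alpha,\bs\beta)$ on which the ramification at each $p_n,q_n$ is \emph{exactly} $\alpha^n,\beta^n$; the existence of such a point forces $\alpha'^n \le \alpha^n$ and $\beta'^n \le \beta^n$ factorwise. Part (4) is a direct unpacking of the definition of refined: by Lemma~\ref{lemma:vsunion} and the definition in \S\ref{sec:prelim-lls}, a limit linear series is refined iff $\beta^n_j + \alpha^{n+1}_{r-j} = d-r$ for all $n,j$, which holds generically on $C(\bs\alpha,\bs\beta)$ precisely when $\bs\beta$ is the complementary sequence; otherwise strict inequality $\beta^n_j > d-r-\alpha^{n+1}_{r-j}$ persists everywhere.

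Part (5), the Euler characteristic, follows from the K\"unneth formula: $\chi(C(\bs\alpha,\bs\beta)) = \prod_n \chi(G^{r,\alpha^n,\beta^n}_d(E_n,p_n,q_n))$. By Proposition~\ref{prop:cohomology-of-grdab}, the $n$th factor contributes $0$ when $\pi_n$ surjects onto $\Pic^d E_n$ and $1$ when its image is a point; by Lemma~\ref{lemma:when-surjective-to-Pic}, the surjective case is exactly when no $i$ satisfies $\alpha^n_i + \beta^n_{r-i} = d-r$. Multiplying gives the stated dichotomy. There is no serious obstacle: the only mildly delicate step is the algebraic rearrangement for the dimension formula in (2), and the substantive conceptual content is the K\"unneth reduction together with Proposition~\ref{prop:cohomology-of-grdab} in (5).
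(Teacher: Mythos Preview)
Your proposal is correct and follows essentially the same route as the paper: parts (1), (2), and (5) are proved identically, by applying Theorem~\ref{thm:osserman-rhohat}(2), Proposition~\ref{prop:cohomology-of-grdab}, and Lemma~\ref{lemma:when-surjective-to-Pic} factorwise and multiplying via K\"unneth. The only minor difference is in the converse direction of (3): the paper argues that if the inequalities fail then $C(\bs\alpha,\bs\beta)\cap C(\bs\alpha',\bs\beta')$ has strictly smaller dimension than $C(\bs\alpha,\bs\beta)$ (using part (2) and Lemma~\ref{lem:cab-intersections}), whereas you exhibit a generic point with exact ramification---both arguments are equivalent in spirit and rely on the same dimension/irreducibility input.
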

\begin{proof}
Parts (1) and (2) follows from Theorem \ref{thm:osserman-rhohat}, applied to each elliptic curve $E_i$ individually. One direction of part (3) follows from Lemma \ref{lem:cab-intersections}, while the converse follows from part (2): if the stated inequalities do not hold, then the intersection of the two loci would have dimension strictly smaller than either locus. Part (4) follows by observing that a limit linear series is refined if and only if it lies in $C(\bs \alpha, \bs \beta)$ for a complementary choice of $\bs \alpha, \bs \beta$, but not in $C(\bs \alpha', \bs \beta')$ for any other choice of $\alpha', \beta'$ (see Lemma \ref{lemma:vsunion}), together with the fact (from part (2)) that any other locus would intersect $C(\bs \alpha,\bs \beta)$ (which is equidimensional) in a locus of strictly smaller dimension. 

Part (5) follows from Proposition~\ref{prop:cohomology-of-grdab}, together with the fact that for each $n$, the morphism $G^{r,\alpha^n,\beta^n}_d(E_n, p_n, q_n) \rightarrow \Pic^d(E_n)$ is surjective if any only if there is no $i$ such that $\alpha^n_i + \beta^n_{r-i} = d-r$ (Lemma \ref{lemma:when-surjective-to-Pic}).
\end{proof}

In \S\ref{sec:pontableaux}, we will describe a convenient way to enumerate the pairs $\bs \alpha, \bs \beta$ giving nonempty strata $C(\bs \alpha, \bs \beta)$ and completing the proof of Theorems \ref{thm:grdab-chain-structure} and \ref{thm:grdab-chain-euler}.
But first we recall a standard fact relating the Euler characteristic of a union with the Euler characteristics of the irreducible components and their intersections.  
\begin{definition}\label{def:our-mobius}
For any finite poset $\cP$, define a M\"obius function $\mu_\cP$ on $\cP$ as follows: if $Z$ is maximal, 
let $\mu_\cP(Z)=1$.  Otherwise, define $\mu_\cP(Z)$ recursively by
$$\mu_\cP(Z) = 1 - \sum_{\{Y\in \cP: Y>Z\}} \mu_\cP(Y).$$
\end{definition}
\noindent (This differs from the usual M\"obius function, see e.g.~\cite[\S3.7]{ec1}, by a minor change in convention.)
Now let $Z_1,\ldots,Z_N$ be irreducible closed subschemes of a projective $k$-scheme $X$, and let $G=Z_1\cup \cdots\cup Z_N$ be the scheme-theoretic union.  By a {\em closed stratum} of $G$ we mean any  nonempty intersection of the $Z_i$.  We suppose every closed stratum (including the $Z_i$ themselves) is reduced.  We let $\cP$ be the poset of closed strata, ordered by inclusion.

\begin{proposition}\label{prop:cut-n-paste}
We have $$\chi(G) =\sum_{Z\in\cP} \mu_\cP(Z) \chi(Z).$$
\end{proposition}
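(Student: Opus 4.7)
My plan is to prove this in two stages: first, derive an inclusion--exclusion formula for $\chi(G)$ by iterating Mayer--Vietoris; then rewrite the resulting alternating sum as a sum over strata whose coefficients satisfy exactly the recursion defining $\mu_{\cP}$.

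For the first stage, I would start from the Mayer--Vietoris short exact sequence of structure sheaves on $X$,
$$0 \to \cO_{A \cup B} \to \cO_A \oplus \cO_B \to \cO_{A \cap B} \to 0,$$
valid for any two closed subschemes $A, B \subseteq X$ (with scheme-theoretic union and intersection), which on passing to Euler characteristics gives $\chi(A \cup B) = \chi(A) + \chi(B) - \chi(A \cap B)$. Setting $G_k = Z_1 \cup \cdots \cup Z_k$, I would induct on $k$ using $\chi(G_k) = \chi(G_{k-1}) + \chi(Z_k) - \chi(G_{k-1} \cap Z_k)$, invoking the reducedness hypothesis to identify $G_{k-1} \cap Z_k$ scheme-theoretically with $\bigcup_{i<k}(Z_i \cap Z_k)$ and then applying the inductive hypothesis inside $Z_k$ to the collection $\{Z_i \cap Z_k\}_{i<k}$, whose closed strata $Z_{I \cup \{k\}}$ are reduced by hypothesis. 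The outcome will be the inclusion--exclusion identity
$$\chi(G) = \sum_{\emptyset \neq I \subseteq \{1,\ldots,N\}} (-1)^{|I|+1} \chi(Z_I).$$

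For the second stage, I would group terms by stratum. For each $Y \in \cP$, let $J(Y) = \{i : Y \subseteq Z_i\}$, so that $Z_{J(Y)} = Y$, and set $c(Y) = \sum_{\emptyset \neq I,\, Z_I = Y} (-1)^{|I|+1}$; the inclusion--exclusion formula then becomes $\chi(G) = \sum_{Y \in \cP} c(Y)\,\chi(Y)$. Every nonempty $I \subseteq J(Y)$ determines a unique stratum $Z_I = Y'' \in \cP$ with $Y'' \supseteq Y$, so partitioning the nonempty subsets of $J(Y)$ according to the value of $Z_I$ yields
$$\sum_{Y'' \geq Y} c(Y'') = \sum_{\emptyset \neq I \subseteq J(Y)} (-1)^{|I|+1} = 1,$$
the last equality holding because $|J(Y)| \geq 1$. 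This is equivalent to $c(Y) = 1 - \sum_{Y'' > Y} c(Y'')$, which is precisely the recursion defining $\mu_{\cP}(Y)$. Since $c(Y) = 1 = \mu_{\cP}(Y)$ at every maximal $Y$, downward induction on the finite poset $\cP$ will yield $c = \mu_{\cP}$, completing the proof.

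The hard part will be the scheme-theoretic identification in Stage 1: showing that $G_{k-1} \cap Z_k$ and $\bigcup_{i<k}(Z_i \cap Z_k)$ agree as subschemes of $X$, not merely as closed sets. Both have the same underlying topological space, and any union of reduced subschemes is reduced; the delicate point is the reducedness of the scheme-theoretic intersection $G_{k-1} \cap Z_k$ itself, and this is precisely where the hypothesis that every closed stratum is reduced will be invoked most essentially.
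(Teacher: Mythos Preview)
Your two-stage plan mirrors the paper's proof exactly: both pass from two-term Mayer--Vietoris to the full inclusion--exclusion formula, then regroup the alternating sum by stratum and verify that the resulting coefficients obey the recursion defining $\mu_\cP$. (The paper phrases Stage 2 as a short lemma on maps of posets, but its content is precisely your identity $\sum_{Y'' \geq Y} c(Y'') = 1$.)

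The issue you flag in Stage 1 is real, and your proposed resolution does not close it: reducedness of every closed stratum does \emph{not} imply that $G_{k-1} \cap Z_k$ is reduced. Take three concurrent lines $Z_1 = V(x)$, $Z_2 = V(y)$, $Z_3 = V(x+y)$ in $\PP^2$. Every closed stratum (the three lines and their common point $p$) is reduced, yet the scheme-theoretic intersection $(Z_1 \cup Z_2) \cap Z_3$ has ideal $(xy,\, x+y)$, a length-$2$ point, so the distributivity you need fails. Here inclusion--exclusion over the reduced strata gives $3 - 3 + 1 = 1$, whereas $G$ is a plane cubic with $\chi(\cO_G) = 0$; correspondingly $\mu_\cP(L_i)=1$, $\mu_\cP(p)=-2$, and $\sum_{Z \in \cP} \mu_\cP(Z)\chi(Z) = 1 \neq \chi(G)$. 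Thus the proposition is actually false under the stated hypotheses, and the paper's own proof shares the gap at the line ``It follows immediately that\ldots''. What is genuinely required is exactness of the full complex $0 \to \cO_G \to \bigoplus_i \cO_{Z_i} \to \bigoplus_{i<j} \cO_{Z_{ij}} \to \cdots$, equivalently a distributivity condition on the defining ideals; this is an extra hypothesis beyond reducedness of strata and must be verified separately in the intended application.
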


\begin{proof}
This follows, using standard combinatorics, from the elementary fact that $\chi(Z_i\cup Z_j) = \chi(Z_i)+\chi(Z_j)-\chi(Z_i\cap Z_j).$ 
\end{proof}

\subsection{The poset of pontableaux} \label{sec:pontableaux}

The irreducible components of the space of limit linear series on an elliptic chain, and all intersections thereof, can be enumerated by combinatorial objects, closely analogous to set-valued tableaux, called {\em pontableaux}; the pontableaux of \cite{clpt} are a special case (see Remark \ref{rem:pontableauxCLPT}).

In what follows, we will define, for data $(g,r,d,\alpha,\beta)$ as in Definition \ref{def:sigma}, a poset $\PT(g,r,d,\alpha,\beta)$, whose elements are called pontableaux, which will be in bijection with the strata of the space of limit linear series on an elliptic chain. We will define combinatorial attributes $\mu(P), \dim P, \chi(P)$ for all pontableaux $P$; we will show (Lemma \ref{lemma:ct-attributes}) that these coincide with the M\"obius function, dimension, and Euler characteristic of the corresponding strata. After making these definitions, the main combinatorial result of this section will be the following theorem.

\begin{theorem} \label{prop:ptmuchi}
Given data $(g,r,d,\alpha,\beta)$, let $\sigma$ be the corresponding skew shape (Definition \ref{def:sigma}). Then 
$$
\sum_{P \in \PT (g,r,d,\alpha,\beta)}\!\!\!\!\!\!\!\!\!\!\!\!\mu(P) \chi(P) = (-1)^{g - |\sigma|} \cdot \# \left( \mbox{standard set-valued tableaux on $\sigma$ of content } \{1,\ldots,g\} \right).
$$
\end{theorem}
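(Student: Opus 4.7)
The plan is to reduce the signed sum over $\PT(g,r,d,\alpha,\beta)$ to a count of set-valued tableaux by a carefully controlled inclusion-exclusion, exploiting the vanishing given by Lemma~\ref{lemma:cab-attributes}(5). First, under the bijection between pontableaux and pairs $(\bs\alpha,\bs\beta)$ (where $\bs\alpha$ is valid and $\bs\beta$ is compatible), the combinatorial $\chi(P)$ matches $\chi(C(\bs\alpha,\bs\beta))$ which is $1$ precisely when for every $n\in\{1,\ldots,g\}$ some equality $\alpha^n_i+\beta^n_{r-i}=d-r$ holds, and is $0$ otherwise. Combinatorially, this condition picks out pontableaux in which every index $n$ carries at least one of the decorated labels (``$n$'' or ``$n-$'' depending on convention), and the sum collapses to $\sum_{P:\chi(P)=1}\mu(P)$.

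Second, I would define a forgetful map
\[
\Phi\col\{P\in\PT: \chi(P)=1\}\,\longrightarrow\,\{\text{standard set-valued tableaux on }\sigma\text{ of content }\{1,\ldots,g\}\}
\]
by discarding the auxiliary symbols of the form ``$-n$'' and keeping only the ordinary and ``$n-$''-type labels, which naturally occupy the boxes of $\sigma$. The semistandard axioms for pontableaux should descend to verify $\Phi(P)$ is indeed a standard set-valued tableau of the prescribed content on $\sigma$. The first key claim is that $\Phi$ is surjective; given $T$, one constructs a distinguished preimage (e.g.\ the ``unrefined'' pontableau where no $-n$ decorations are added) and more generally all preimages are obtained by locally enriching $T$ with the auxiliary symbols in a controlled way, one per additional piece of data.

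The combinatorial heart of the argument is to show
\[
\sum_{P\in\Phi^{-1}(T)}\mu(P)\;=\;(-1)^{g-|\sigma|}
\]
for every $T$. The exponent $g-|\sigma|$ has a clean meaning: a standard set-valued tableau on $\sigma$ of content $\{1,\ldots,g\}$ distributes $g$ labels among $|\sigma|$ boxes, so there are exactly $g-|\sigma|$ ``extra'' labels which can be sources of coarse ramification on the corresponding elliptic factor. For each such extra label, a pontableau in $\Phi^{-1}(T)$ has a choice about whether to record the associated ramification equality as an ordinary label or to refine it by also adding an auxiliary ``$-n$'' symbol. I would argue that $\Phi^{-1}(T)$ decomposes as a product (over the $g-|\sigma|$ extra labels) of small local posets, each isomorphic to a two-element chain corresponding to the refine/not-refine choice. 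Using Proposition~\ref{prop:cut-n-paste} together with the multiplicativity of the M\"obius function on a product of posets, and the elementary computation $\mu(\text{two-element chain})=-1$ on the non-maximal element, the total M\"obius sum on each fiber is $(-1)^{g-|\sigma|}$.

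The main obstacle will be the careful analysis of the poset structure of $\Phi^{-1}(T)$: verifying that, despite the interaction between the two types of auxiliary decorations (``$-n$'' at one end of a shared box and ``$n-$'' at the other) and the multi-box nature of set-valued labels, the fiber really does factor as a product of independent two-element chains indexed by the extra labels. This requires showing that the containment relation on strata $C(\bs\alpha,\bs\beta)$, characterized entry-wise by Lemma~\ref{lemma:cab-attributes}(3), translates into the product order on these local choices. Once this factorization is established, summing $(-1)^{g-|\sigma|}$ over all standard set-valued tableaux $T$ on $\sigma$ yields the claimed formula and completes the proof.
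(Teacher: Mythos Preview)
Your approach has a genuine gap at the very first step: the map $\Phi$ is not well-defined. The underlying set-valued tableau $\tab(P)$ of a pontableau $P$ records the positions of the augmentations ``$n$'', and these need not lie in $\sigma$ even when $\chi(P)=1$. Concretely, in the example of Figure~\ref{fig:poset1} (where $\sigma$ is a single box), the pontableau $S2\text{-}5$ has both augmentations $1$ and $2$ present, yet one of them sits in a box of row $1$ that is not part of $\sigma$. So there are pontableaux with $\chi(P)=1$ whose image under your proposed forgetful map is not a set-valued tableau on $\sigma$ at all; these contribute to the sum and must be shown to cancel. That cancellation is exactly the ``$0$ otherwise'' clause in the paper's Lemma~\ref{lemma:sumGivenTab}, and it is not a formality.

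Even restricting to the good fibers, the structural claim fails. For the unique standard set-valued tableau on the one-box $\sigma$ in Figure~\ref{fig:poset1} (content $\{1,2\}$, so $g-|\sigma|=1$), the fiber $\tab^{-1}(T)$ consists of the three pontableaux $S1\text{-}2,\ S0\text{-}1,\ S1\text{-}5$, forming a ``V'' rather than a two-element chain. More fundamentally, $\mu(P)$ is the M\"obius function of the \emph{entire} poset $\PT$, not of the fiber sub-poset, so even a product decomposition of the fiber would not let you read off $\sum_{P\in\Phi^{-1}(T)}\mu(P)$ by multiplicativity. In the example just mentioned, the intrinsic M\"obius sum of the three-element ``V'' is $1$, whereas the actual sum of the ambient $\mu(P)$ values is $-1+1-1=-1$, as required.

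The paper's route sidesteps all of this by not filtering on $\chi$ first. It groups the sum by $\tab(P)$ over \emph{all} set-valued tableaux $t$ (not just those on $\sigma$) and proves, by induction on $g$, that $\sum_{P\in\tab^{-1}(t)}\mu(P)$ equals $(-1)^{N-|\sigma|}$ when $t$ is almost-standard on $\sigma$ and vanishes otherwise (Lemma~\ref{lemma:sumGivenTab}). The induction works by peeling off the last label and reducing to two instances of the $g-1$ problem with endpoints $\rho^g$ and $\rho^g\setminus b_g$; the delicate part is precisely the vanishing for tableaux whose augmentations stray outside $\sigma$, which your outline does not address.
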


\subsubsection{The set of pontableaux} \label{sec:pontSet}

Pontableaux will be defined in terms of sequences of nonincreasing $(r+1)$-tuples of integers. These $(r+1)$-tuples may be interpreted as 
the right border of a set of boxes extending
infinitely to the left. We first fix some notation that will be convenient throughout this section.

\begin{enumerate}
\item Lowercase Greek letters will denote non-increasing $(r+1)$-tuples of integers, which will be indexed from $0$ to $r$. The elements of a tuple $\lambda$ will be denoted $(\lambda_0,\cdots,\lambda_r)$. Note that there is no requirement that the $\lambda_i$ be nonnegative.  We will identify a tuple $\lambda$ with the set $\{ (x,y):\ 0 \leq y \leq r,\ x < \lambda_y\}$. For example, we will write $\lambda \subseteq \rho$ to mean that $\lambda_y \leq \rho_y$ for all $0 \leq y \leq r$. The elements of this set will be called the boxes \emph{contained in} the tuple.
Visually, $\lambda$ defines an eastern border on $\ZZ^2$ and we associate to $\lambda$ the infinite set of boxes to the left of the border.
\item If $\rho,\lambda$ are two nonincreasing $(r+1)$-tuples, then $\rho / \lambda$ will denote the skew shape obtained by taking the set difference of the boxes contained in $\rho$ minus the boxes contained in $\lambda$.
$$
\rho / \lambda = \{ (x,y):\ 0 \leq y \leq r,\ \lambda_y \leq x < \rho_y \}.
$$
Note that we do not necessarily assume that $\lambda_y \subseteq \rho_y$ when using this notation.
\item We restrict our attention to the horizontal strip $\ZZ\times\{0,1,\ldots,r\}$, whose elements are called  \emph{boxes}.
We will say that a box $(x,y)\in \ZZ \times \{0,1,\ldots,r\}$ is an {\em inward corner}, respectively an {\em outward corner}, of $\lambda$ if it is minimal not in $\lambda$, respectively maximal in $\lambda$, with respect to the order $\preceq$ on $\ZZ$ (Definition~\ref{def:young}).

\item If $(x,y)$ is an inward corner, we will write $\lambda \cup (x,y)$ to denote the tuple resulting from increasing $\lambda_y$ by $1$. If $(x,y)$ is an outward corner, we will write $\lambda \backslash (x,y)$ to denote the tuple resulting from decreasing $\lambda_y$ by one.
\end{enumerate}

\exnow{Let $\lambda = (3,1)$. The inward corners of $\lambda$ are $(3,0)$ and $(1,1)$, and the outward corners of $\lambda$ are $(2,0)$ and $(0,1)$.
$$\cdots\Yvcentermath1\young(\ \ \ ,\ )$$
}

\begin{obs}
A box $(x,y)$ is an inward corner of $\lambda$ if and only if it is an outward corner of $\lambda \cup (x,y)$. Every tuple $\rho$ that differs from $\lambda$ by $1$ in one place is obtained by either adding an inward corner or removing an outward corner of $\lambda$.
\end{obs}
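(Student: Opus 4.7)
The plan is to verify both parts by directly unpacking the definitions of inward corner, outward corner, and the operations $\lambda \cup (x,y)$ and $\lambda \setminus (x,y)$, while keeping careful track of the non-increasing condition on $(r+1)$-tuples.

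For the first assertion, I would start by translating ``$(x,y)$ is an inward corner of $\lambda$'' into numerical conditions on the $\lambda_i$: since $(x,y) \notin \lambda$ forces $x \ge \lambda_y$, and minimality of $(x,y)$ forces $(x-1,y) \in \lambda$ (so $\lambda_y \ge x$), we conclude $x = \lambda_y$; moreover, if $y > 0$, the condition $(x,y-1) \in \lambda$ combined with the non-increasing requirement gives the strict inequality $\lambda_{y-1} > \lambda_y$. Setting $\mu = \lambda \cup (x,y)$, so $\mu_y = \lambda_y + 1$ and $\mu_{y'} = \lambda_{y'}$ otherwise, the strict inequality $\lambda_{y-1} > \lambda_y$ is exactly what is needed to ensure $\mu$ is still non-increasing. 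I then verify the outward-corner conditions for $(x,y)$ in $\mu$: we have $\mu_y = x+1$ (so $(x,y) \in \mu$ but $(x+1,y) \notin \mu$), and for $y < r$ we have $\mu_{y+1} = \lambda_{y+1} \le \lambda_y = x < x+1 = \mu_y$, ensuring $(x,y+1) \notin \mu$. The reverse direction is symmetric: if $(x,y)$ is an outward corner of $\mu = \lambda \cup (x,y)$, then the same numerical identities together with the non-increasing condition on $\mu$ force $x = \lambda_y$ and (when $y > 0$) $\lambda_{y-1} > \lambda_y$, so $(x,y)$ is an inward corner of $\lambda$.

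For the second assertion, I would split into the two cases $\rho_y = \lambda_y + 1$ and $\rho_y = \lambda_y - 1$ (with $\rho_{y'} = \lambda_{y'}$ for $y' \ne y$), and in each case exhibit the corner explicitly. In the first case, the fact that $\rho$ is non-increasing forces $\lambda_{y-1} \ge \lambda_y + 1$ when $y > 0$, which is exactly the condition making $(\lambda_y, y)$ an inward corner of $\lambda$; then tautologically $\rho = \lambda \cup (\lambda_y, y)$. In the second case, the non-increasing condition on $\rho$ forces $\lambda_{y+1} \le \lambda_y - 1$ when $y < r$, which makes $(\lambda_y - 1, y)$ an outward corner of $\lambda$, and $\rho = \lambda \setminus (\lambda_y - 1, y)$.

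There isn't really a hard step here: the whole content of the observation is that the combinatorial definitions of ``inward/outward corner'' have been set up so as to correspond exactly to adding or removing a single box. The only point that requires a little care is making sure that the extra monotonicity conditions (e.g.\ $\lambda_{y-1} > \lambda_y$ for a corner at row $y$) emerge naturally from demanding that $\lambda \cup (x,y)$ or $\rho$ be an honest non-increasing tuple; once that bookkeeping is in place the two parts follow immediately.
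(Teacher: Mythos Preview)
Your verification is correct. The paper states this as an observation without proof, and your direct unpacking of the definitions is exactly the intended routine check; there is nothing more to it.
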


\begin{defn} \label{def:pontseq}
A \emph{pontableau sequence} is a sequence $t = (\lambda^1,\rho^1,\lambda^2,\rho^2,\cdots,\lambda^g,\rho^g)$ of non-increasing $(r+1)$-tuples of integers, satisfying the following two conditions.
\begin{enumerate}
\item For all $1 \leq n \leq g-1$, $\rho^n \supseteq \lambda^{n+1}$.
\item For all $1 \leq n \leq g$, either $\lambda^n \supseteq \rho^n$ or there is a single inward corner $b_n$ of $\lambda^n$ such that $\lambda^n \cup b_n \supseteq \rho^n$.
\end{enumerate}

Denote by $\PT(\lambda^1,\rho^g)$ the set of pontableaux with the specified values of $\lambda^1$ and $\rho^g$. For data $(g,r,d,\alpha,\beta)$ as in \S \ref{ss:notation}, denote by $\PT(g,r,d,\alpha,\beta)$ the set $\PT(\lambda^1,\rho^g)$, where
\begin{eqnarray*}
\lambda^1 &=& (-\alpha_0, -\alpha_1, \cdots, -\alpha_r)\\
\rho^g &=& (g-d+r + \beta_r, g-d+r + \beta_{r-1}, \cdots,g-d+r + \beta_0).
\end{eqnarray*}
\end{defn}

In other words, a pontableau sequence is a sequence of box sets that can grow only one box at at time, and only between $\lambda^n$ and $\rho^n$, but that can shrink by any number of boxes at any step.

\begin{defn} \label{def:pontramseq}
Given a pontableau sequence $t = (\lambda^1,\rho^1,\lambda^2,\rho^2,\cdots,\lambda^g,\rho^g)$, the \emph{ramification sequences} of $t$ are the following nondecreasing $(r+1)$-tuples, for $1 \leq n \leq g$.
\begin{eqnarray*}
\alpha^n_i &=& (n-1) - \lambda^n_i\\
\beta^n_i &=& \rho^n_{r-i} - (n-d+r)
\end{eqnarray*}
\end{defn}

\begin{obs} \label{obs:pontcompat}
The conditions defining a pontableau sequence are equivalent to the following conditions on the associated ramification sequences.
\begin{enumerate}
\item $\alpha^{n+1}_i + \beta^{n}_{r-i} \geq d-r$ for all $i \in \{0,1,\cdots,r\}$.
\item $\alpha^n_i + \beta^n_{r-i}  \leq d-r$ for all $i \in \{0,1,\cdots,r\}$, with equality for at most one value of $i$.
\end{enumerate}
These conditions are equivalent to saying, in the language of \S \ref{ss:eh-strat}, that the $\alpha^n$ form a valid sequence (where we define $\alpha^{g+1}$ in terms of $\beta$ by $\alpha^{g+1}_i = d-r-\beta_{r-i}$), the $\beta^n$ form a compatible sequence, and these sequences satisfy the necessary and sufficient condition of Lemma \ref{lemma:cab-attributes}(1) to determine a nonempty stratum.
\end{obs}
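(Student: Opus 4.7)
The proposal is to verify the observation by direct substitution, translating each pontableau-sequence condition into an inequality on the associated ramification data via the formulas
$$\alpha^n_i = (n-1) - \lambda^n_i, \qquad \beta^n_i = \rho^n_{r-i} - (n - d + r),$$
from Definition~\ref{def:pontramseq}. The translation is algebraic in each case, and the identification with the conditions of \S\ref{ss:eh-strat} then follows by combining the two resulting inequalities.

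For Definition~\ref{def:pontseq}(1), substituting directly gives
$$\alpha^{n+1}_i + \beta^n_{r-i} \;=\; \rho^n_i - \lambda^{n+1}_i + (d-r),$$
so the ramification inequality $\alpha^{n+1}_i + \beta^n_{r-i} \geq d - r$ holds for all $i$ iff $\lambda^{n+1}_i \leq \rho^n_i$ for all $i$, which is exactly $\rho^n \supseteq \lambda^{n+1}$. For Definition~\ref{def:pontseq}(2), the analogous substitution yields
$$\alpha^{n}_i + \beta^n_{r-i} \;=\; \rho^n_i - \lambda^n_i + (d-r-1),$$
so the inequality $\alpha^n_i + \beta^n_{r-i} \leq d-r$ is equivalent to $\rho^n_i \leq \lambda^n_i + 1$, with equality exactly when $\rho^n_i = \lambda^n_i + 1$. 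The condition that equality holds for at most one $i$ thus says that $\rho^n$ exceeds $\lambda^n$ in at most one row, and then by exactly one box. This is equivalent to the geometric statement that either $\lambda^n \supseteq \rho^n$, or there is a unique index $y$ for which $b_n = (\lambda^n_y, y)$ is an inward corner of $\lambda^n$ with $\lambda^n \cup b_n \supseteq \rho^n$. The only point requiring a brief check is that $(\lambda^n_y, y)$ really is an inward corner, i.e., that adjoining it keeps the sequence non-increasing; for $y > 0$ this follows from $\lambda^n_{y-1} \geq \rho^n_{y-1} \geq \rho^n_y = \lambda^n_y + 1$ by non-increasingness of $\rho^n$.

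Finally, to identify these inequalities with the conditions of \S\ref{ss:eh-strat}, the boundary data $\alpha^1 = \alpha$, $\beta^g = \beta$, and $\alpha^{g+1}_i = d - r - \beta_{r-i}$ follow directly from the choices of $\lambda^1$ and $\rho^g$ in Definition~\ref{def:pontseq}. Subtracting inequality (2) of the observation from inequality (1) yields $\alpha^{n+1}_j \geq \alpha^n_j$, and equality in this difference forces both original inequalities to be tight, which by (2) happens for at most one $j$ per $n$; this is precisely the validity of $\bs\alpha$. Inequality (1) is then exactly the compatibility of $\bs\beta$ with $\bs\alpha$, and inequality (2) is exactly the nonemptiness criterion of Lemma~\ref{lemma:cab-attributes}(1). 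There is no real obstacle here — the observation is a bookkeeping correspondence between the combinatorial (skew-shape) and inequality-based viewpoints on strata, and the only nontrivial verification is the inward-corner check in the middle paragraph.
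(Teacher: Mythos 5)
Your proposal is correct, and since the paper states Observation~\ref{obs:pontcompat} without proof, yours fills in exactly the verification the authors elided. The substitutions $\alpha^{n+1}_i + \beta^n_{r-i} = \rho^n_i - \lambda^{n+1}_i + (d-r)$ and $\alpha^n_i + \beta^n_{r-i} = \rho^n_i - \lambda^n_i + (d-r-1)$ are both correct, the inward-corner check (that $\lambda^n_{y-1} \geq \rho^n_{y-1} \geq \rho^n_y = \lambda^n_y + 1$ when $y>0$, using that $\rho^n$ is non-increasing and that $\rho^n_{y-1}\le\lambda^n_{y-1}$ since $y-1\ne y$) is exactly the one non-obvious point and you handle it, and the reduction of validity of $\bs\alpha$ to the two displayed inequalities — difference nonnegative, equality only when both are tight, and the latter controlled by (2) — is right. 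One nit worth making explicit if you were to write this up: the ``single inward corner'' alternative of Definition~\ref{def:pontseq}(2) a priori allows $\rho^n_y\le\lambda^n_y$ as well, in which case it degenerates to $\lambda^n\supseteq\rho^n$; so the two alternatives are equivalent to ``$\rho^n_i\le\lambda^n_i+1$ for all $i$, with strict inequality except at most once, and when the exception occurs it is at a box where adjoining preserves non-increasingness.'' You state and verify precisely this, so the argument is complete.
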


Although pontableau sequences are convenient for use in our proofs, it is preferable to concisely encode the same information as the sequence, as follows.

Given a pontableau sequence $(\lambda^1,\cdots,\rho^g)$, we associate to each $(x,y) \in \ZZ \times \{0,1,\cdots,r\}$ a set $t(x,y)$ of symbols. Each symbol is one of ``$n$'', ``$-n$,'' or ``$n-$,'' where $n$ is an integer from $\{1,2,\cdots,g\}$. The set $t(x,y)$ is determined as follows.
\begin{enumerate}
\item If $(x,y)$ is contained in $\rho^n$ but not $\lambda^n$, then include the symbol ``$n$'' in $t(x,y)$. This symbol is called an \emph{augmentation}.
\item If $(x,y)$ is contained in $\rho^{n-1}$ but not $\lambda^{n}$, then include the symbol ``$-n$'' in $t(x,y)$. This symbol is called a \emph{left removal}.
\item If $(x,y)$ is contained in $\lambda^n$ but not $\rho^n$, then include the symbol ``$n-$'' in $t(x,y)$. This symbol is called a \emph{right removal}.
\end{enumerate}

Taken together, the sets $t(x,y)$ and the initial tuple $\lambda^1$ uniquely encode all of the tuples in a pontableau sequence. Indeed, the left removals $-n$ encode which boxes must be removed from $\rho^{n-1}$ to obtain $\lambda^n$, and the right removals $n-$ and augmentation $n$ (which occurs in a unique box if at all) encode which boxes must be removed and added from $\lambda^n$ to obtain $\rho^n$, respectively.

\begin{defn} \label{def:pontlabeling}
The \emph{labeling associated to a pontableau sequence} $t = (\lambda^1,\cdots,\rho^g)$ is obtained by writing in every box $(x,y) \in \ZZ \times \{0,1,\cdots,r\}$ the elements of the set $t(x,y)$ as described above. We will use the word \emph{pontableau} to refer interchangeably to a pontableau sequence or to its associated labeling.
\end{defn}

\begin{defn} \label{def:underlyingSVT}
For a pontableau $P$, the \emph{underlying set-valued tableau}, denoted $\tab (P)$, is the skew set-valued tableau obtained by placing in box $(x,y)$ all of the augmentations ``$n$'' in $t(x,y)$.
\end{defn}

\begin{example} \label{ex:pontableau}
Let $(g,r,d,\alpha,\beta) = (2,1,4,(0,0),(0,2))$. We consider the pontableau sequence $t=(\lambda^1,\rho^1,\lambda^2,\rho^2)= ((0,0),(1,-1),(1,-2),(1,-1)).$
The associated ramification sequences are recorded in the table below.
$$
\begin{array}{|r|cc|cc|}
\hline
i & \alpha^1 & \beta^1 & \alpha^2 & \beta^2\\
\hline
0 & 0 & 1 & 0 & 0\\
1 & 0 & 3 & 3 & 2\\
\hline
\end{array}
$$
The pontableau labeling associated to $t$ is

$$
\begin{array}{|c|c|c|}
\cline{3-3}
\multicolumn{2}{c|}{} & 1\\
\cline{1-3}
-2,2 & 1- & \multicolumn{1}{c}{}\\
\cline{1-2}
\end{array}
$$

\noindent and the underlying set-valued tableau of $t$  is
$$
\begin{array}{|c|c c|}
\cline{3-3}
\multicolumn{2}{c|}{} & 1\\
\cline{1-1} \cline{3-3}
2 & \,\,\, & \multicolumn{1}{c}{}\\
\cline{1-1}
\end{array}
$$

\end{example}
\smallskip

\begin{remark} \label{rem:pontableauxCLPT}
The ``pontableaux'' of \cite{clpt} are a special case of the pontableaux we have defined here, albeit with modified notation. Consider the special case where $\rho^g \supseteq \lambda^1$, $\sigma  = \rho^g / \lambda^1$ is connected, and $\sigma$ has exactly $g-1$ boxes. Consider only those pontableaux that have no left-removals (i.e. such that $\rho^n = \lambda^{n+1}$ for all $n$). Such pontableaux either have $g-1$ distinct augmentations (all in different boxes), or one right-removal and $g$ different augmentations. These correspond bijectively to the pontableaux considered in \cite{clpt}. The difference in notation is that we now indicated the removals by ``$n-$'' rather than ``$-n$,'' in order to distinguish left-removals and right-removals; this distinction is non-existent in \cite{clpt}, where only top-dimensional strata are labeled with pontableaux.
\end{remark}

\subsubsection{The poset structure}

Pontableaux (for a fixed choice of $\lambda^1$ and $\rho^g$, i.e. for fixed data $(g,r,d,\alpha,\beta)$) are arranged into a poset as follows.

\begin{defn} \label{def:ptposet}
Let $P,\overline{P}$ be two pontableaux in $\PT(\lambda^1,\rho^g)$. Say that $P$ \emph{generizes to} $\overline{P}$ (or that $\overline{P}$ \emph{specializes to} $P$), written $\overline{P} \supseteq P$, if for all $n \in \{1,2,\cdots,g-1\}$,
$$
\lambda^{n+1} \subseteq \overline{\lambda}^{n+1} \subseteq \overline{\rho}^n \subseteq \rho^n,
$$
where $\lambda^n$ and $\rho^n$ (respectively, $\overline{\lambda}^n$ and $\overline{\rho}^n$) are the tuples in the pontableau sequence of $P$ (respectively, $\overline{P}$). Regard  $\PT(\lambda^1,\rho^g)$ as a poset with this partial order.
\end{defn}

\begin{example} \label{ex:posets}
An example of the poset of pontableaux is shown in 
Figure \ref{fig:poset3}. If two or more strata share the same underlying set-valued tableau, they are enclosed by a dashed line. Note that the example in Figure \ref{fig:poset3} is the same example as \cite[Figure 7]{clpt}, except that now all strata are displayed, not just the top-dimensional ones.

\begin{figure}

\begin{tikzpicture}[xscale=0.6,yscale=0.75,transform shape]

\node (S1-1) at (18,-12){
\begin{tabular}{|c|c|}
\cline{1-2}
$\st 1$ & $\st 4$\\
\cline{1-2}
$\st 3$ & $\st 5$\\
\cline{1-2}
\end{tabular}
};
\node (S1-2) at (12,-12){
\begin{tabular}{|c|c|}
\cline{1-2}
$\st 1$ & $\st 4$\\
\cline{1-2}
$\st 2$ & $\st 5$\\
\cline{1-2}
\end{tabular}
};
\node (S1-3) at (6,-16){
\begin{tabular}{|c|c|c|}
\cline{1-3}
$\st 1$ & $\st 3$ & $\st 4,5-$\\
\cline{1-3}
$\st 2$ & $\st 5$ & \multicolumn{1}{|c}{}\\
\cline{1-2}
\end{tabular}
};
\node (S1-4) at (18,-16){
\begin{tabular}{|c|c|c|}
\cline{2-3}
\multicolumn{1}{c|}{} & $\st 1$ & $\st 4$\\
\cline{1-3}
$\st 1-,2$ & $\st 3$ & $\st 5$\\
\cline{1-3}
\end{tabular}
};
\node (S1-5) at (6,4){
\begin{tabular}{|c|c|c|}
\cline{2-3}
\multicolumn{1}{c|}{} & $\st 1$ & $\st 3$\\
\cline{1-3}
$\st 1-,2$ & $\st 4$ & $\st 5$\\
\cline{1-3}
\end{tabular}
};
\node (S1-6) at (12,4){
\begin{tabular}{|c|c|c|}
\cline{1-3}
$\st 1$ & $\st 2$ & $\st 3,4-$\\
\cline{1-3}
$\st 4$ & $\st 5$ & \multicolumn{1}{|c}{}\\
\cline{1-2}
\end{tabular}
};
\node (S1-7) at (24,-12){
\begin{tabular}{|c|c|}
\cline{1-2}
$\st 2$ & $\st 4$\\
\cline{1-2}
$\st 3$ & $\st 5$\\
\cline{1-2}
\end{tabular}
};
\node (S1-8) at (12,8){
\begin{tabular}{|c|c|c|}
\cline{1-3}
$\st 1$ & $\st 2$ & $\st 3,5-$\\
\cline{1-3}
$\st 4$ & $\st 5$ & \multicolumn{1}{|c}{}\\
\cline{1-2}
\end{tabular}
};
\node (S1-9) at (18,4){
\begin{tabular}{|c|c|c|}
\cline{1-3}
$\st 1$ & $\st 2$ & $\st 4,5-$\\
\cline{1-3}
$\st 3$ & $\st 5$ & \multicolumn{1}{|c}{}\\
\cline{1-2}
\end{tabular}
};
\node (S1-10) at (0,-12){
\begin{tabular}{|c|c|}
\cline{1-2}
$\st 1$ & $\st 3$\\
\cline{1-2}
$\st 2$ & $\st 4$\\
\cline{1-2}
\end{tabular}
};
\node (S1-11) at (6,-12){
\begin{tabular}{|c|c|}
\cline{1-2}
$\st 1$ & $\st 3$\\
\cline{1-2}
$\st 2$ & $\st 5$\\
\cline{1-2}
\end{tabular}
};
\node (S1-12) at (6,0){
\begin{tabular}{|c|c|}
\cline{1-2}
$\st 1$ & $\st 3$\\
\cline{1-2}
$\st 4$ & $\st 5$\\
\cline{1-2}
\end{tabular}
};
\node (S1-13) at (0,0){
\begin{tabular}{|c|c|}
\cline{1-2}
$\st 2$ & $\st 3$\\
\cline{1-2}
$\st 4$ & $\st 5$\\
\cline{1-2}
\end{tabular}
};
\node (S1-14) at (12,-4){
\begin{tabular}{|c|c|c|}
\cline{2-3}
\multicolumn{1}{c|}{} & $\st 1$ & $\st 2$\\
\cline{1-3}
$\st 2-,3$ & $\st 4$ & $\st 5$\\
\cline{1-3}
\end{tabular}
};
\node (S1-15) at (18,0){
\begin{tabular}{|c|c|}
\cline{1-2}
$\st 1$ & $\st 2$\\
\cline{1-2}
$\st 3$ & $\st 5$\\
\cline{1-2}
\end{tabular}
};
\node (S1-16) at (6,-6){
\begin{tabular}{|c|c|}
\cline{1-2}
$\st 1$ & $\st 3$\\
\cline{1-2}
$\st 2,3-,4$ & $\st 5$\\
\cline{1-2}
\end{tabular}
};
\node (S1-17) at (12,-8){
\begin{tabular}{|c|c|c|}
\cline{2-3}
\multicolumn{1}{c|}{} & $\st 1$ & $\st 2$\\
\cline{1-3}
$\st 1-,3$ & $\st 4$ & $\st 5$\\
\cline{1-3}
\end{tabular}
};
\node (S1-18) at (18,-6){
\begin{tabular}{|c|c|}
\cline{1-2}
$\st 1$ & $\st 2,3-,4$\\
\cline{1-2}
$\st 3$ & $\st 5$\\
\cline{1-2}
\end{tabular}
};
\node (S1-19) at (24,0){
\begin{tabular}{|c|c|}
\cline{1-2}
$\st 1$ & $\st 2$\\
\cline{1-2}
$\st 3$ & $\st 4$\\
\cline{1-2}
\end{tabular}
};
\node (S1-20) at (12,0){
\begin{tabular}{|c|c|}
\cline{1-2}
$\st 1$ & $\st 2$\\
\cline{1-2}
$\st 4$ & $\st 5$\\
\cline{1-2}
\end{tabular}
};
\node (S0-1) at (21,-12){
\begin{tabular}{|c|c|}
\cline{1-2}
$\st 1,-2,2$ & $\st 4$\\
\cline{1-2}
$\st 3$ & $\st 5$\\
\cline{1-2}
\end{tabular}
};
\node (S0-2) at (18,-3){
\begin{tabular}{|c|c|}
\cline{1-2}
$\st 1$ & $\st 2,-4,4$\\
\cline{1-2}
$\st 3$ & $\st 5$\\
\cline{1-2}
\end{tabular}
};
\node (S0-3) at (9,-12){
\begin{tabular}{|c|c|}
\cline{1-2}
$\st 1$ & $\st 3,-4,4$\\
\cline{1-2}
$\st 2$ & $\st 5$\\
\cline{1-2}
\end{tabular}
};
\node (S0-4) at (15,-12){
\begin{tabular}{|c|c|}
\cline{1-2}
$\st 1$ & $\st 4$\\
\cline{1-2}
$\st 2,-3,3$ & $\st 5$\\
\cline{1-2}
\end{tabular}
};
\node (S0-5) at (9,0){
\begin{tabular}{|c|c|}
\cline{1-2}
$\st 1$ & $\st 2,-3,3$\\
\cline{1-2}
$\st 4$ & $\st 5$\\
\cline{1-2}
\end{tabular}
};
\node (S0-6) at (3,-12){
\begin{tabular}{|c|c|}
\cline{1-2}
$\st 1$ & $\st 3$\\
\cline{1-2}
$\st 2$ & $\st 4,-5,5$\\
\cline{1-2}
\end{tabular}
};
\node (S0-7) at (6,-14){
\begin{tabular}{|c|c|c|}
\cline{1-3}
$\st 1$ & $\st 3$ & $\st 4,-5$\\
\cline{1-3}
$\st 2$ & $\st 5$ & \multicolumn{1}{|c}{}\\
\cline{1-2}
\end{tabular}
};
\node (S0-8) at (12,-2){
\begin{tabular}{|c|c|c|}
\cline{2-3}
\multicolumn{1}{c|}{} & $\st 1$ & $\st 2$\\
\cline{1-3}
$\st -3,3$ & $\st 4$ & $\st 5$\\
\cline{1-3}
\end{tabular}
};
\node (S0-9) at (3,0){
\begin{tabular}{|c|c|}
\cline{1-2}
$\st 1,-2,2$ & $\st 3$\\
\cline{1-2}
$\st 4$ & $\st 5$\\
\cline{1-2}
\end{tabular}
};
\node (S0-10) at (15,0){
\begin{tabular}{|c|c|}
\cline{1-2}
$\st 1$ & $\st 2$\\
\cline{1-2}
$\st 3,-4,4$ & $\st 5$\\
\cline{1-2}
\end{tabular}
};
\node (S0-11) at (18,-9){
\begin{tabular}{|c|c|}
\cline{1-2}
$\st 1$ & $\st 2,-3,4$\\
\cline{1-2}
$\st 3$ & $\st 5$\\
\cline{1-2}
\end{tabular}
};
\node (S0-12) at (6,2){
\begin{tabular}{|c|c|c|}
\cline{2-3}
\multicolumn{1}{c|}{} & $\st 1$ & $\st 3$\\
\cline{1-3}
$\st -2,2$ & $\st 4$ & $\st 5$\\
\cline{1-3}
\end{tabular}
};
\node (S0-13) at (6,-9){
\begin{tabular}{|c|c|}
\cline{1-2}
$\st 1$ & $\st 3$\\
\cline{1-2}
$\st 2,-4,4$ & $\st 5$\\
\cline{1-2}
\end{tabular}
};
\node (S0-14) at (6,-3){
\begin{tabular}{|c|c|}
\cline{1-2}
$\st 1$ & $\st 3$\\
\cline{1-2}
$\st 2,-3,4$ & $\st 5$\\
\cline{1-2}
\end{tabular}
};
\node (S0-15) at (12,6){
\begin{tabular}{|c|c|c|}
\cline{1-3}
$\st 1$ & $\st 2$ & $\st 3,-5$\\
\cline{1-3}
$\st 4$ & $\st 5$ & \multicolumn{1}{|c}{}\\
\cline{1-2}
\end{tabular}
};
\node (S0-16) at (12,2){
\begin{tabular}{|c|c|c|}
\cline{1-3}
$\st 1$ & $\st 2$ & $\st 3,-4$\\
\cline{1-3}
$\st 4$ & $\st 5$ & \multicolumn{1}{|c}{}\\
\cline{1-2}
\end{tabular}
};
\node (S0-17) at (12,-6){
\begin{tabular}{|c|c|c|}
\cline{2-3}
\multicolumn{1}{c|}{} & $\st 1$ & $\st 2$\\
\cline{1-3}
$\st -2,3$ & $\st 4$ & $\st 5$\\
\cline{1-3}
\end{tabular}
};
\node (S0-18) at (21,0){
\begin{tabular}{|c|c|}
\cline{1-2}
$\st 1$ & $\st 2$\\
\cline{1-2}
$\st 3$ & $\st 4,-5,5$\\
\cline{1-2}
\end{tabular}
};
\node (S0-19) at (18,2){
\begin{tabular}{|c|c|c|}
\cline{1-3}
$\st 1$ & $\st 2$ & $\st 4,-5$\\
\cline{1-3}
$\st 3$ & $\st 5$ & \multicolumn{1}{|c}{}\\
\cline{1-2}
\end{tabular}
};
\node (S0-20) at (18,-14){
\begin{tabular}{|c|c|c|}
\cline{2-3}
\multicolumn{1}{c|}{} & $\st 1$ & $\st 4$\\
\cline{1-3}
$\st -2,2$ & $\st 3$ & $\st 5$\\
\cline{1-3}
\end{tabular}
};
\draw[thick,black,->] (S1-1) -- (S0-1);
\draw[thick,black,->] (S1-1) -- (S0-4);
\draw[thick,black,->] (S1-1) -- (S0-11);
\draw[thick,black,->] (S1-1) -- (S0-20);
\draw[thick,black,->] (S1-2) -- (S0-3);
\draw[thick,black,->] (S1-2) -- (S0-4);
\draw[thick,black,->] (S1-3) -- (S0-7);
\draw[thick,black,->] (S1-4) -- (S0-20);
\draw[thick,black,->] (S1-5) -- (S0-12);
\draw[thick,black,->] (S1-6) -- (S0-15);
\draw[thick,black,->] (S1-6) -- (S0-16);
\draw[thick,black,->] (S1-7) -- (S0-1);
\draw[thick,black,->] (S1-8) -- (S0-15);
\draw[thick,black,->] (S1-9) -- (S0-19);
\draw[thick,black,->] (S1-10) -- (S0-6);
\draw[thick,black,->] (S1-11) -- (S0-3);
\draw[thick,black,->] (S1-11) -- (S0-6);
\draw[thick,black,->] (S1-11) -- (S0-7);
\draw[thick,black,->] (S1-11) -- (S0-13);
\draw[thick,black,->] (S1-12) -- (S0-5);
\draw[thick,black,->] (S1-12) -- (S0-9);
\draw[thick,black,->] (S1-12) -- (S0-12);
\draw[thick,black,->] (S1-12) -- (S0-14);
\draw[thick,black,->] (S1-13) -- (S0-9);
\draw[thick,black,->] (S1-14) -- (S0-8);
\draw[thick,black,->] (S1-14) -- (S0-17);
\draw[thick,black,->] (S1-15) -- (S0-2);
\draw[thick,black,->] (S1-15) -- (S0-10);
\draw[thick,black,->] (S1-15) -- (S0-18);
\draw[thick,black,->] (S1-15) -- (S0-19);
\draw[thick,black,->] (S1-16) -- (S0-13);
\draw[thick,black,->] (S1-16) -- (S0-14);
\draw[thick,black,->] (S1-17) -- (S0-17);
\draw[thick,black,->] (S1-18) -- (S0-2);
\draw[thick,black,->] (S1-18) -- (S0-11);
\draw[thick,black,->] (S1-19) -- (S0-18);
\draw[thick,black,->] (S1-20) -- (S0-5);
\draw[thick,black,->] (S1-20) -- (S0-8);
\draw[thick,black,->] (S1-20) -- (S0-10);
\draw[thick,black,->] (S1-20) -- (S0-16);

\draw[dashed,blue] (4.5,0.8) -- (7.5,0.8) -- (7.5,5) -- (4.5,5) -- cycle;
\draw[dashed,blue] (4.5,-1.8) -- (7.5,-1.8) -- (7.5,-10.2) -- (4.5,-10.2) -- cycle;
\draw[dashed,blue] (4.5,-12.8) -- (7.5,-12.8) -- (7.5,-17) -- (4.5,-17) -- cycle;

\draw[dashed,blue] (10.5,0.8) -- (13.5,0.8) -- (13.5,9) -- (10.5,9) -- cycle;
\draw[dashed,blue] (10.5,-0.8) -- (13.5,-0.8) -- (13.5,-9) -- (10.5,-9) -- cycle;

\draw[dashed,blue] (16.5,0.8) -- (19.5,0.8) -- (19.5,5) -- (16.5,5) -- cycle;
\draw[dashed,blue] (16.5,-1.8) -- (19.5,-1.8) -- (19.5,-10.2) -- (16.5,-10.2) -- cycle;
\draw[dashed,blue] (16.5,-12.8) -- (19.5,-12.8) -- (19.5,-17) -- (16.5,-17) -- cycle;

\end{tikzpicture}
\caption{The pontableau poset for $(g,r,d,\alpha,\beta) = (5,1,4,(0,0),(0,0))$. Compare to \cite[Figure 7]{clpt}.} \label{fig:poset3}
\end{figure}
\end{example}

Define the following three combinatorial attributes of pontableaux.

\begin{defn} \label{def:pontFuncs}
Let $P$ be a pontableau. Define $\mu(P), \dim P$, and $\chi(P)$ as follows.
\begin{enumerate}
\item Denote by $L$ the number of left removals in $P$. Let $\mu(P)$ be $0$ if $P$ has two identical left removals (i.e. ``$-n$'' for the same value of $n$) in horizontally or vertically adjacent boxes. Otherwise, let $\mu(P) = (-1)^L$.
\item Let $\dim P =  g - \# \left( \mbox{augmentations in $P$}\right) + \# \left( \mbox{right-removals in $P$} \right)$.
\item Let $\chi(P) = 1$ if every possible augmentation $1,2,\cdots,g$ occurs somewhere in $P$, and let $\chi(P) = 0$ otherwise.
\end{enumerate}
\end{defn}

\subsubsection{Properties of the M\"obius function of the pontableau poset}

We first verify that $\mu(P)$ indeed gives the M\"obius function for the poset $\PT(\lambda^1,\rho^g)$.

\begin{lemma} \label{lemma:ptmu}
Fix data $(\lambda^1,\rho^g)$. The function $\mu$ given in Definition \ref{def:pontFuncs}, restricted to the set $\PT(\lambda^1,\rho^g)$, is equal to the M\"obius function $\mu_{\PT(\lambda^1\rho^g)}$ (see Definition \ref{def:our-mobius}).
\end{lemma}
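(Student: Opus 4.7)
The plan is to show $\mu$ satisfies the recursion characterizing $\mu_{\PT(\lambda^1,\rho^g)}$, which by Definition~\ref{def:our-mobius} reduces to proving
$$\sum_{P'\in U(P)}\mu(P')=1 \quad\text{for every }P\in\PT(\lambda^1,\rho^g),$$
where $U(P)=\{P':P'\ge P\}$. I would induct on the number $L(P)$ of left-removals in $P$. The base case $L(P)=0$ is immediate: by Remark~\ref{rem:posetFromLabels}, any cover $P'>P$ resolves some left-removal of $P$, so $P$ is maximal in $U(P)$, $U(P)=\{P\}$, and $\mu(P)=(-1)^0=1$.

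For the inductive step, fix a canonical left-removal at box $(x_0,y_0)$ with label $-n$, chosen as the last left-removal of $P$ in a fixed reading order (say bottom-to-top, right-to-left, breaking ties by largest $n$). This choice ensures that neither $(x_0+1,y_0)$ nor $(x_0,y_0+1)$ carries $-n$ in $P$, so Remark~\ref{rem:posetFromLabels}(1) yields an elementary ``up'' generization $P_C>P$ with $L(P_C)=L(P)-1$, obtained by turning $-n$ at $(x_0,y_0)$ into $(n{-}1)-$ (possibly annihilating an augmentation $n{-}1$ at that box). When additionally neither $(x_0-1,y_0)$ nor $(x_0,y_0-1)$ carries $-n$, Remark~\ref{rem:posetFromLabels}(2) furnishes a symmetric ``down'' generization $P_B>P$ with $L(P_B)=L(P)-1$.

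I partition $U(P)=U_A\sqcup U_B\sqcup U_C$ according to the state of $(x_0,y_0)$ in $P'\geq P$: still a left-removal ($A$), contained in $\lambda'^n$ ($B$), or outside $\rho'^{n-1}$ ($C$). One checks directly from Definition~\ref{def:ptposet} that $U_C=U(P_C)$ always, and $U_B=U(P_B)$ when $P_B$ is defined; the inductive hypothesis then gives $\sum_{U_C}\mu=1$ and (when applicable) $\sum_{U_B}\mu=1$. For the class $A$ sum, consider the map $\psi\colon U_A\to U_C$ that applies the $C$-move at $(x_0,y_0)$: by the canonical choice, $\psi$ is always well-defined, and it strictly reduces the number of left-removals by one (or annihilates an augmentation), flipping the sign of $\mu$ wherever it is nonzero. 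When $P$ has no adjacent identical $-n$ at the canonical position, a direct analysis of the nonincreasing constraints on $\rho^{n-1}$ and $\lambda^n$ shows that $\psi$ is in fact a bijection, yielding $\sum_{U_A}\mu=-\sum_{U_C}\mu=-1$ and hence $\sum_{U(P)}\mu=-1+1+1=1$ as required.

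The main obstacle is the case where $P$ has an adjacent identical $-n$ at $(x_0-1,y_0)$ or $(x_0,y_0-1)$: then $\mu(P)=0$, $P_B$ is undefined, and $\psi$ fails to be surjective. Here I would refine the partition by also tracking the state of the adjacent $-n$. Configurations in $U_A$ in which the adjacent $-n$ also remains have $\mu=0$ (they still carry an adjacent identical pair in $P'$), while configurations in $U_A$ in which the adjacent $-n$ has been resolved pair, under an analogous sign-flipping map, with precisely the elements of $U_C$ lying outside the image of $\psi$. A careful case analysis in this refined setting, invoking the inductive hypothesis for the generizations that resolve the adjacent pair simultaneously, shows that all boundary terms cancel and $\sum_{U(P)}\mu=1$ still holds, completing the induction.
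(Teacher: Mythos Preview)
Your approach is genuinely different from the paper's and works cleanly in the ``easy case,'' but the hard case has a real gap. The paper's proof exploits a key structural fact you do not use: the upper set $U(P)$ is naturally a \emph{product} $\prod_{n=1}^{g-1} M(\rho^n,\lambda^{n+1})$, where $M(\rho,\lambda)=\{(\rho',\lambda'):\rho\supseteq\rho'\supseteq\lambda'\supseteq\lambda\}$, and $\mu$ factors accordingly as $\mu(\overline P)=\prod_n f(\overline\rho^n,\overline\lambda^{n+1})$ with $f(\rho',\lambda')=(-1)^{|\rho'/\lambda'|}$ if $\rho'/\lambda'$ has no adjacent boxes and $0$ otherwise. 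This reduces the whole lemma to the single-index identity $\sum_{(\rho',\lambda')\in M(\rho,\lambda)}f(\rho',\lambda')=1$, which the paper proves by a short corner-counting argument. Your induction on $L(P)$ processes one left-removal at a time across the whole pontableau, forfeiting this factorization and forcing you into the adjacency case analysis.

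In the hard case your proposed bijection is simply false. Take $\rho^{n-1}/\lambda^n$ to be a horizontal strip of three boxes, with the rightmost as $(x_0,y_0)$. Then ``$U_A$ with the adjacent $-n$ resolved'' has exactly one element (namely $\rho'=\rho^{n-1}$, $\lambda'=\lambda^n$ plus the two left boxes), whereas $U_C\setminus\psi(U_A)$ has three elements. No sign-flipping bijection can pair these. The overall identities $\sum_{U_A}\mu=-1$, $\sum_{U_B}\mu=1$, $\sum_{U_C}\mu=1$ do hold in this example, but not via the mechanism you describe. You also do not address $U_B$ in the hard case: it is nonempty even when $P_B$ is undefined (any $P'$ with $(x_0,y_0)\in\lambda'^n$ forces the adjacent box into $\lambda'^n$ as well, so $U_B=U(P_{BB})$ for the pontableau resolving both boxes simultaneously), and this observation is needed to invoke induction there. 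If you insist on your route, you would need to handle an entire connected component of the $-n$ region at once rather than a single box; but at that point you are essentially redoing the paper's single-index computation, and you may as well use the product decomposition directly.
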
 

\begin{proof}
Fix a pontableau $P \in \PT(\lambda^1,\rho^g)$. It suffices to verify the equation
$$
\sum_{\overline{P} \supseteq P} \mu(\overline{P}) = 1,
$$
where the sum is taken over all $\overline{P} \in \PT(\lambda^1,\rho^g)$ generizing $P$ (including $P$ itself).

The $g-1$ chains of inclusions $\lambda^{n+1} \subseteq \overline{\lambda}^{n+1} \subseteq \overline{\rho}^n \subseteq \rho^n$ in Definition \ref{def:ptposet} are independent of each other, and if $(\overline{\lambda}^1,\cdots,\overline{\rho}^g)$ satisfy these $g-1$ chains of inclusions, then they are a pontableau sequence. So the choice of $\overline{P}$ amounts to $g-1$ independent choices of a pair $(\overline{\rho}^n,\overline{\lambda}^{n+1})$. 

Let $\rho \supseteq \lambda$ be two nonincreasing tuples. We use the following notation:

\begin{eqnarray*}
I(\rho,\lambda) &=& \{\rho':\ \rho \supseteq \rho' \supseteq \lambda\}\\
M(\rho,\lambda) &=& \{(\rho',\lambda'):\ \rho \supseteq \rho' \supseteq \lambda' \supseteq \lambda \}\\
f(\rho,\lambda) &=& \begin{cases} (-1)^{|\rho / \lambda|} & \mbox{ if no two boxes of $\rho / \lambda$ are adjacent}\\ 0 & \mbox{ otherwise.}\end{cases}\\
s(\rho,\lambda) &=& \sum_{(\rho',\lambda') \in M(\rho,\lambda)} f(\rho',\lambda')
\end{eqnarray*}

The set of all $\overline{P} \supseteq P$ is in bijection with $\prod_{n=1}^{g-1} M(\rho^n,\lambda^{n+1})$, and the set of left removals ``$-n$'' in $\overline{P}$ is in bijection with the boxes of $\rho^{n-1} / \lambda^n$. It follows that $\mu(\overline{P}) = \prod_{n=1}^{g-1} f(\overline{\rho}^n, \overline{\lambda}^{n+1})$. Therefore
 $$
 \sum_{\overline{P} \supseteq P} \mu(\overline{P}) = \prod_{g=1}^{n-1} s(\rho^n,\lambda^{n+1}).
 $$

The lemma will immediately follow from this equation and the following claim.

\textit{Claim.} For any two tuples $\rho \supseteq \lambda$, $s(\rho,\lambda) = 1$.

\textit{Proof of claim.} Rearranging the summation,
$$
s(\rho,\lambda) = \sum_{\rho' \in I(\rho,\lambda)} \sum_{\lambda' \in I(\rho',\lambda)} f(\rho',\lambda').
$$
In the inner sum, the only $\lambda'$ that give nonzero values of $f(\rho',\lambda')$ are obtained by adding to the boxes of $\rho'$ some subset of the set $C = \{ \mbox{inward corners of } \rho'\} \cap \lambda$. Conversely, any subset $S \subseteq C$ gives a choice $\lambda'$ that contributes $(-1)^S$ to the inner sum. Therefore, for fixed $\rho',\lambda$,
$$
\sum_{\lambda' \in I(\rho',\lambda)} f(\rho',\lambda') = \sum_{S \subseteq C} (-1)^{|S|} = 
\begin{cases}
1 & \mbox{ if } C = \emptyset\\
0 & \mbox{ otherwise.}
\end{cases}
$$
Therefore the inner sum is equal to $1$ if and only if $\rho' = \lambda$, and $0$ otherwise. The claim follows, and also the lemma.
\end{proof}

Recall the definition of \emph{almost standard} from Section \ref{ss:svt}. In what follows, we will always be refering to almost-standard tableaux with content a subset of $\{1,2,\cdots,g\}$.

\begin{lemma} \label{lemma:sumGivenTab}
Let $t$ be any set-valued tableau (not necessarily almost-standard) whose content is an $N$-element subset of $\{1,2,\cdots,g\}$, and denote by $\tab^{-1}(t,g,\lambda^1,\rho^g)$ the set of all pontableaux $P \in \PT(\lambda^1,\rho^g)$ such that $\tab(P) = t$. Let $\sigma$ be the skew shape $\rho^g / \lambda^1$. Then
$$
\sum_{P \in \tab^{-1}(t,g,\lambda^1,\rho^g)} \mu(P) =
\begin{cases}
(-1)^{N-|\sigma|} & \mbox{ if $t$ is an almost-standard set-valued tableau on $\sigma$}\\
0 & \mbox{ otherwise.}
\end{cases}
$$
\end{lemma}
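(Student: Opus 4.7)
The plan is to compute $T := \sum_{P \in \tab^{-1}(t,g,\lambda^1,\rho^g)} \mu(P)$ by sequentially integrating out the variables in the pontableau sequence, generalizing the argument within the proof of Lemma~\ref{lemma:ptmu}. First, if some symbol $n$ appears in two distinct boxes of $t$, then $\tab^{-1}(t,g,\lambda^1,\rho^g)$ is empty, since the augmentation $n$ of any pontableau occupies the unique inward corner $A_n$ added at step $n$; so $T = 0$, matching the ``otherwise'' case. Assume henceforth each symbol occurs at most once, and write $A_n$ for the (possibly empty) unique box carrying symbol $n$ in $t$.

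Writing a pontableau sequence as $(L_1 = \lambda^1, R_1, L_2, \ldots, L_g, R_g = \rho^g)$, one has
$$T = \sum \prod_{n=1}^g g_n(L_n, R_n)\; \prod_{n=1}^{g-1} f(R_n, L_{n+1}),$$
where $g_n(L,R)$ is the indicator that step $n$ is valid (namely $R \subseteq L \cup A_n$, $A_n \in R$, and $A_n$ an inward corner of $L$ when $A_n \neq \emptyset$; and $R \subseteq L$ when $A_n = \emptyset$), and $f$ is as in Lemma~\ref{lemma:ptmu}. The dual of the inner-sum identity from the claim in that lemma yields
$$\sum_{R \in [L', L]} f(R, L') = [L = L'].$$
Combined with the augmentation constraint $A_n \in R_n$, summing over each $R_n$ collapses the transition $L_n \to L_{n+1}$ to a simple rule: at a no-augmentation step, $L_{n+1} = L_n$ with contribution $+1$; at an augmentation step, either $L_{n+1} = L_n \cup A_n$ (the ``add'' move, contribution $+1$) or $L_{n+1} = L_n$ (the ``skip'' move, contribution $-1$), both requiring $A_n$ to be an inward corner of $L_n$.

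Hence $L_n$ is monotone, $L_g = \lambda^1 \cup \{A_n : n \in I\}$ for some subset $I$ of augmentation steps, and the endpoint factor $g_g(L_g, \rho^g)$ requires $\rho^g \subseteq L_g \cup A_g$ (or $\subseteq L_g$ if no augmentation at step $g$). For almost-standard $t$ on $\sigma$, each box $b \in \sigma$ carrying labels $m_1 < \cdots < m_{k_b}$ has $A_{m_1} = \cdots = A_{m_{k_b}} = b$, and the requirement $b \notin L_{m_i}$ at each such step forces the unique pattern ``skip at $m_1, \ldots, m_{k_b-1}$, add at $m_{k_b}$.'' The almost-standard hypothesis ensures that at the add, every $\sigma$-predecessor of $b$ has already been added (its largest label being strictly less than $m_1$), so $A_{m_{k_b}}$ is indeed an inward corner; moreover $L_g \cup A_g = \lambda^1 \cup \sigma = \rho^g$, so $g_g = 1$. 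Multiplying over boxes yields the sign $\prod_{b \in \sigma}(-1)^{k_b - 1} = (-1)^{N - |\sigma|}$, as claimed.

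Otherwise $T = 0$, by one of the following mechanisms: an unlabeled $\sigma$-box forces $L_g \not\supseteq \rho^g$; $A_g$ is labeled but lies outside $\rho^g$, so $g_g = 0$; an ordering violation in $t$ forces some $A_n$ to fail the inward-corner test at its step; or a label on a box outside $\lambda^1 \cup \rho^g$ admits a free add/skip choice at its last occurrence, and the two contributions $+1, -1$ cancel since neither affects $g_g$. The main obstacle is the uniform treatment of this last cancellation case, which will be handled by selecting a topmost-leftmost such superfluous labeled box (whose predecessors in $L_{m_{k_b}}$ do not depend on its own add/skip choice) and defining a sign-reversing involution by toggling that choice.
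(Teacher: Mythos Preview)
Your reduction to the ``add/skip'' model is correct and is essentially a forward-in-time version of the paper's backward induction: the paper peels off step $g$, summing over $(\rho^{g-1},\lambda^g)$ to obtain the dichotomy $\rho^{g-1}\in\{\rho^g,\rho^g\setminus b_g\}$, whereas you collapse each $R_n$ to obtain $L_{n+1}\in\{L_n,L_n\cup A_n\}$. The almost-standard case is handled correctly by your argument.

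However, your treatment of the ``otherwise'' case has a genuine gap. The sign-reversing involution that toggles at the \emph{topmost-leftmost} superfluous box does not preserve validity of later steps. Concretely, take $r=1$, $\lambda^1=(0,0)$, $\rho^g=(1,0)$, $g=4$, and $t$ given by $(0,0)\mapsto\{1\}$, $(1,0)\mapsto\{2\}$, $(2,0)\mapsto\{3\}$. The superfluous boxes are $(1,0)$ and $(2,0)$; the topmost-leftmost is $(1,0)$. The only valid sequences (with $g_4=1$) are $(\mathrm{add},\mathrm{add},\mathrm{add})$ and $(\mathrm{add},\mathrm{add},\mathrm{skip})$. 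Toggling step~$2$ sends the first to $(\mathrm{add},\mathrm{skip},\mathrm{add})$, which is \emph{invalid}: at step~$3$, $A_3=(2,0)$ requires $(1,0)\in L_3$, but $L_3=(1,0)$ does not contain $(1,0)$. Your parenthetical justification checks only that the toggle is well-defined at step $m_{k_b}$ itself (the predecessors of $b$ are unchanged), but not that later inward-corner tests remain unaffected.

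The fix is to toggle instead at the superfluous box $b$ whose \emph{last label} $m_{k_b}$ is largest. Then for any augmentation step $m>m_{k_b}$, the box $A_m$ is not superfluous (by maximality), hence (excluding the trivially-zero cases you list) $A_m\in\rho^g$; but any box with $b$ as an immediate predecessor and lying in $\rho^g$ would force $b\in\rho^g$, contradicting superfluousness of $b$. So later tests and $g_g$ are unaffected. You also omit the easy case of a labeled box lying in $\lambda^1$, which kills all terms since such a box is never an inward corner of any $L_n\supseteq\lambda^1$. The paper's backward induction avoids the need for any such involution: the inductive hypothesis absorbs the ``otherwise'' case automatically at each step.
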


\begin{proof}
We may assume throughout that \emph{any given symbol $n$ occurs at most once in $t$,} since otherwise $t$ is not the underlying set-valued tableau of any pontableau, and both sides of the claimed equation are equal to $0$. We will denote by $b_n$ the box in which the label $n$ occurs in $t$, if it does occur; if $n$ does not occur in $t$ we will say that $b_n$ does not exist. We may also assume that \emph{either $b_g$ does not exist, or it is an outward corner of $\rho^g$}. This is because if $b_g$ exists but is not an outward corner of $\rho^g$, then there are no almost-standard set-valued tableaux on $\sigma$ with $g$ in box $b_g$, and also there are no pontableaux with an augmentation ``$g$'' in box $b_g$, and again both sides of the claimed equation are $0$.

We proceed by induction on $g$. Consider first the case $g=1$. In this case, $\PT(\lambda^1,\rho^1)$ is either empty or contains a single element, depending on whether $\sigma = \rho^1 / \lambda^1$ has more than one box. If it is nonempty, then its single element $P$ has $\mu(P) = 1$, and there is a single almost-standard set-valued tableau on $\sigma$: either the empty tableau or the tableau obtained by placing ``$1$'' in the single box of $\sigma$. If the poset is empty, then $\sigma$ has no almost-standard set-valued tableau, since there is only one label available for more than one box. So the lemma holds for $g=1$.

Now suppose that $g \geq 2$, and that the lemma holds for smaller values of $g$. Fix a set-valued tableau $t$ with symbols chosen from $\{1,2,\cdots,g\}$, and let $t'$ be the set-valued tableau obtained by removing $g$ from $t$ if it appears. As observed in the first paragraph, we may assume that $t$ has no repeated symbols, and the last symbol $g$, if it appears, appears in an outward corner of $\rho^g$.

Reorder the sum in question according to the choice of $\rho^{g-1}$ and $\lambda^{g}$. Here, each sum can be taken over the set of all possible $(r+1)$-tuples (only finitely many terms will be nonzero).

\begin{eqnarray*}
\sum_{P \in \tab^{-1}(t,g,\lambda^1,\rho^g)} \mu(P) = \sum_{\rho^{g-1}} \ \sum_{\lambda^g} \{ \mu(P):\ P \mbox{ has specified choice of $\rho^{g-1},\lambda^g$}\}
\end{eqnarray*}

Define the function $f$ as in the proof of Lemma \ref{lemma:ptmu}, and observe that if $P = (\lambda^1,\cdots,\rho^g) \in \tab^{-1}(t,g,\lambda^1,\rho^g)$, then defining $P' = (\lambda^1,\cdots,\rho^{g-1})$, we have $P' \in \PT(\lambda^1,\rho^{g-1})$, $\mu(P) = \mu(P') f(\rho^{g-1}, \lambda^g)$, and the underlying set-valued tableau of $P'$ is $t'$.

Therefore the sum may be rewritten as follows. Here, the sums are taken over the following sets: $\rho^{g-1}$ is chosen from the set of all nonincreasing $(r+1)$-tuples; $P'$ is chosen from $\tab^{-1}(t',g-1,\lambda^1,\rho^{g-1})$; $\lambda^{g}$ is chosen from either $I(\rho^{g-1} \backslash b_g, \rho^g \backslash b_g)$ (if $b_g$ exists, i.e. $g$ occurs in $t$), or $I(\rho^{g-1}, \rho^g)$ (if $b_g$ does not exist). This is because $\rho^g \subseteq \lambda^g \subseteq \rho^{g-1}$ and $b_g\not\in\lambda^g$, if $b_g$ exists.

$$
\sum_P \mu(P) = \sum_{\rho^{g-1}} \left( \sum_{P'} \mu(P') \right) \left( \sum_{\lambda^g} f(\rho^{g-1},\lambda^g) \right)
$$

\textit{Case 1:} $b_g$ does not exist. In this case, we can conclude, as in the proof of Lemma \ref{lemma:ptmu}, that the second inner sum is
$$
\sum_{\lambda^g} f(\rho^{g-1},\lambda^g) =
\begin{cases}
1 & \mbox{ if } \rho^{g-1} = \rho^g\\
0 & \mbox{ otherwise.}
\end{cases}
$$

Therefore the overall sum reduces to only the term where $\rho^{g-1} = \rho^g$, hence it is $\sum_{P'} \mu(P')$, where the sum is taken over $P' \in \tab^{-1}(t',g-1,\lambda^1,\rho^{g})$. The statement of the lemma now follows from the inductive hypothesis.

\textit{Case 2:} $b_g$ is an outward corner of $\rho^g$.

The second inner sum $\sum_{\lambda^g} f(\rho^{g-1},\lambda^g)$ has one nonzero term for each choice of a set $C$ of outward corners of $\rho^{g-1}$ such that 
\begin{itemize}
\item If $b_g\in \rho^{g-1}$ then $C$ is contained in the set of outward corners between $\rho^{g-1}\setminus b_g$ and $\rho^g\setminus b_g$,
\item if $b_g\not\in\rho^{g-1}$ then $C$ is contained in the set of outward corners between $\rho^{g-1}$ and $\rho^g\setminus b_g$.
\end{itemize}

The contribution of this term is $(-1)^{|C|}$. These terms will cancel each other unless there is only one of them. 
Hence, if $b_g \in\rho^{g-1}$, then $\rho^{g-1}\setminus b_g = \rho^g\setminus b_g$ and the inner sum is $f(\rho^{g-1},\rho^{g-1}\setminus b_g) = -1$.  If $b_g\not\in \rho^{g-1}$, then $\rho^{g-1} = \rho^g \setminus b_g$ and the inner sum is $f(\rho^{g-1},\rho^{g-1}) = 1$.  Therefore,
it follows that the inner sum is

$$
\sum_{\lambda^g} f(\rho^{g-1},\lambda^g) = 
\begin{cases}
1 & \mbox{ if } \rho^{g-1} = \rho^g \backslash b_g\\
-1 & \mbox{ if } \rho^{g-1} = \rho^g\\
0 & \mbox{ otherwise}.
\end{cases}
$$

Therefore the overall sum is the following difference of two terms involving posets of pontableaux for $g-1$.

$$
\sum_P \mu(P) = \sum_{P' \in \tab^{-1}(t,g-1,\lambda^1,\rho^g \backslash b_g)} \mu(P') - \sum_{P' \in \tab^{-1}(t,g-1,\lambda^1,\rho^g)} \mu(P')
$$

By the inductive hypothesis, the two sums on the right side are as follows.

$$
\sum_{P' \in \tab^{-1}(t,g-1,\lambda^1,\rho^g \backslash b_g)} \mu(P') =
\begin{cases}
(-1)^{(N-1) - (|\sigma|-1)} & \mbox{ if $t'$ is almost-standard on $\sigma \backslash b_g$}\\
0 & \mbox{ otherwise.}
\end{cases}
$$

$$
\sum_{P' \in \tab^{-1}(t,g-1,\lambda^1,\rho^g)} \mu(P') =
\begin{cases}
(-1)^{(N-1) - |\sigma|} & \mbox{ if $t'$ is almost-standard on $\sigma$}\\
0 & \mbox{ otherwise.}
\end{cases}
$$

Now, note that $t$ is almost-standard on $\sigma$ if and only if either $g$ is the only symbol in its box and $t'$ is almost-standard on $\sigma \backslash b_g$ or $g$ is not the only symbol in its box and $t'$ is almost-standard on $\sigma$. In either case, one of the sums above is zero and the other is $\pm 1$, and their difference is the desired quantity from the lemma statement. On the other hand, if $t$ is not almost-standard on $\sigma$, then both of the sums above are zero and again the lemma statement follows.
This completes the induction, and establishes the lemma.
\end{proof}

From these two lemmas, Theorems \ref{prop:ptmuchi} and \ref{thm:grdab-chain-euler} follow.

\begin{proof}[Proof of Theorem \ref{prop:ptmuchi}]
Rearrange the summation to group together pontableaux with the same underlying tableau $t$. Note that for a pontableau $P$, the value $\chi(P)$ depends only on the underlying set-valued tableau $t$; hence we can denote this value by $\chi(t)$. Therefore we may write
$$
\sum_{P \in \PT(g,r,d,\alpha,\beta)} \mu(P) \chi(P) = \sum_{t} \chi(t) \sum_{P \in \tab^{-1}(t)} \mu(P).
$$
Since $\chi(t) = 1$ if $t$ has content $\{1,2,\cdots,g\}$, and $\chi(t) = 0$ otherwise, we need only sum over the set-valued tableaux on $\sigma$ with content $\{1,2,\cdots,g\}$. It follows from Lemma \ref{lemma:sumGivenTab} that this sum is equal to $(-1)^{g-|\sigma|}$ times the number of almost-standard set-valued tableaux on $\sigma$ with content $\{1,\ldots,g\}$.
\end{proof}

\subsection{Proof of Theorems \ref{thm:grdab-chain-structure} and \ref{thm:grdab-chain-euler}} \label{ss:ehscheme-proofs}

We now assemble the results above to describe in detail the geometry of the Eisenbud-Harris scheme $\Grdab(X,p,q)$, where $X$ is a generic twice-marked chain of elliptic curves. Throughout this subsection, fix data $(g,r,d,\alpha,\beta)$ and the chain $(X,p,q)$.

First note that, in light of Observation \ref{obs:pontcompat}, the nonempty loci $C(\bs \alpha, \bs \beta)$ are in bijection with pontableaux $P \in \PT(g,r,d,\alpha,\beta)$. Hence we will denote by $$C(T) \subseteq \Grdab(X,p,q)$$ the locus corresponding to a pontableau $T$. First we point out that the geometric facts about $C(\bs \alpha, \bs \beta)$ translate to combinatorial attributes of $T$.

\begin{lemma} \label{lemma:ct-attributes}
For any pontableau $T \in \PT(g,r,d,\alpha,\beta)$,
\begin{enumerate}
\item $C(T)$ is nonempty and equidimensional of dimension $\dim(T)$.
\item $\chi(C(T)) = \chi(T)$.
\item If $T$ has no left-removals, then a dense open subset of $C(T)$ consists of refined series. Otherwise, all points of $C(T)$ correspond to coarse series.
\item The M\"obius function of the poset of loci $C(T)$ is equal to $\mu(P)$.
\end{enumerate}
\end{lemma}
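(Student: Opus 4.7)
The plan is to deduce each of the four statements directly from Lemma~\ref{lemma:cab-attributes}, via the dictionary (Definition~\ref{def:pontramseq} and Observation~\ref{obs:pontcompat}) between pontableau sequences and pairs $(\bs\alpha,\bs\beta)$ of (valid, compatible) ramification sequences. The two key translation identities, both immediate from Definition~\ref{def:pontramseq}, are
\begin{align*}
\alpha^n_i + \beta^n_{r-i} = d-r \;&\iff\; \rho^n_i - \lambda^n_i = 1,\\
\beta^n_i + \alpha^{n+1}_{r-i} - (d-r) \;&=\; \rho^n_{r-i} - \lambda^{n+1}_{r-i}.
\end{align*}
Summing the second identity over $i$ shows that the double sum appearing in Lemma~\ref{lemma:cab-attributes}(2) counts exactly the boxes removed in passing from $\rho^n$ to $\lambda^{n+1}$, summed over $n$, which is the total number of left-removals in $T$.

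For (1), nonemptiness is Lemma~\ref{lemma:cab-attributes}(1) combined with Observation~\ref{obs:pontcompat} (which records that the pontableau axioms are equivalent to the nonemptiness criterion), and equidimensionality with $\dim C(T) = \rho - (\#\text{left-removals})$ follows from Lemma~\ref{lemma:cab-attributes}(2). To identify this with $\dim T$ of Definition~\ref{def:pontFuncs}, I would count boxes globally: the signed total $\sum_y (\rho^g_y - \lambda^1_y)$ equals $g-\rho$ on one hand (using $\lambda^1_y = -\alpha_y$ and $\rho^g_y = g-d+r+\beta_{r-y}$), and equals $(\#\text{augmentations}) - (\#\text{right-removals}) - (\#\text{left-removals})$ on the other, by telescoping the signed change at each step of the pontableau sequence. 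Rearranging yields $\rho - (\#\text{left-removals}) = g - (\#\text{augmentations}) + (\#\text{right-removals}) = \dim T$, as required.

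Statements (2) and (3) fall out of the dictionary almost immediately. For (2), Lemma~\ref{lemma:cab-attributes}(5) says $\chi(C(T))=1$ iff for each $n$ some $i$ satisfies $\alpha^n_i + \beta^n_{r-i} = d-r$; by the first translation identity this means each symbol $n \in \{1,\ldots,g\}$ appears as an augmentation in $T$, matching $\chi(T)$. For (3), Lemma~\ref{lemma:cab-attributes}(4) gives that a dense open subset of $C(T)$ parametrizes refined series iff $\bs\beta$ is complementary to $\bs\alpha$, i.e.~$\rho^n = \lambda^{n+1}$ for all $n$, which by definition is the absence of left-removals in $T$.

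For (4), Lemma~\ref{lemma:cab-attributes}(3) translates, via $\alpha^n_i = (n-1) - \lambda^n_i$ and $\beta^n_i = \rho^n_{r-i} - (n-d+r)$, to: $C(T) \subseteq C(T')$ iff $\lambda^n \subseteq \lambda'^n$ and $\rho^n \supseteq \rho'^n$ for all $n$. Since $\lambda^1$ and $\rho^g$ are fixed across $\PT(\lambda^1,\rho^g)$, this is exactly the generization relation $T' \supseteq T$ of Definition~\ref{def:ptposet}, so the poset of closed strata $\{C(T)\}$ under inclusion is isomorphic to $\PT(\lambda^1,\rho^g)$; then (4) reduces to Lemma~\ref{lemma:ptmu}. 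The only real bookkeeping challenge I anticipate is in (1), since $\lambda^n$ may have negative entries and $|\sigma|$ is not simply $|\rho^g|-|\lambda^1|$; using the signed total $\sum_y (\rho^g_y - \lambda^1_y)$ in place of $|\sigma|$ bypasses the sign issue cleanly, and the remaining parts are routine translations once the dictionary is in place.
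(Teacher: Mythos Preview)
Your proposal is correct and follows essentially the same approach as the paper's proof: both deduce each part directly from the corresponding part of Lemma~\ref{lemma:cab-attributes} via the dictionary of Definition~\ref{def:pontramseq} and Observation~\ref{obs:pontcompat}, with the same telescoping box-count to identify $\rho - (\#\text{left-removals})$ with $\dim T$ in part (1), and the same appeal to Lemma~\ref{lemma:ptmu} for part (4).
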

\begin{proof}
The nonemptiness claim in Part (1) follows from Observation \ref{obs:pontcompat} and part (1) of Lemma \ref{lemma:cab-attributes}. For the dimension claim, note that part (2) of Lemma \ref{lemma:cab-attributes} and the definition of $\alpha^{n}_i, \beta^n_i$ in terms of $T$ shows that
$$
\dim C(T) = \rho - \sum_{n=1}^{g-1} \sum_{i=0}^r (\rho^n_{r-i} - \lambda^{n+1}_i),
$$
where $\rho$ (with no subscripts or superscripts) here denotes the Brill-Noether number, rather than the numbers $\rho^n_i$ encoded in $T$. In other words, $\dim C(T)$ is $\rho$ minus the number of left-removals in $T$. Expressing $\rho$ in terms of $\lambda^1_{i}$ and $\rho^g_i$ shows that it is equal to $g$ minus the number of augmentations in $T$ plus the number of removals (left or right). Hence $\dim C(T)$ is equal to $g$ plus the number of \textit{right}-removals in $T$ minus the number of augmentations in $T$, which is $\dim T$. This proves part (1).

Part (2) follows from part (5) of Lemma \ref{lemma:cab-attributes} and the observation that for any value of $n$, the equality $\alpha^n_i + \beta^n_{r-i} = d-r$ holds for some $i$ if and only if the augmentation ``$n$'' appears in $T$.

Part (3) follows from part (4) of Lemma \ref{lemma:cab-attributes} and the observation that $\bs \beta$ is complementary to $\bs \alpha$ if and only if there are no left-removals in $P$.

Part (4) follows from part (3) of Lemma \ref{lemma:cab-attributes}, the observation that the condition stated there matches the definition of the poset structure on $\PT(g,r,d,\alpha,\beta)$, and Lemma \ref{lemma:ptmu}.
\end{proof}

\begin{proof}[Proof of Theorem \ref{thm:grdab-chain-structure}]
Note that $\hat{\rho} \geq 0$ if and only if $|\sigma| \leq g$. If $|\sigma| \leq g$, then it is possible to construct a pontableau for the data $(g,r,d,\alpha,\beta)$: the boxes of $\sigma$ can be filled in with any almost-standard tableaux, and then a right-removal ``$g-$'' can be placed in all boxes of $\lambda^1 / \rho^g$. The result will be a pontableau. Hence by Lemma \ref{lemma:ct-attributes} part (1), $\Grdab(X,p,q)$ is nonempty. Conversely, if $\Grdab(X,p,q)$ is nonempty, then by Lemma \ref{lemma:vsunion} and the fact that nonempty loci $C(\bs \alpha, \bs \beta)$ correspond to pontableaux, there exists a pontableau for the data $(g,r,d,\alpha,\beta)$, hence an almost-standard set-valued tableau on $\sigma$. Hence $\sigma$ has at most $g$ boxes, and $\hat{\rho} \geq 0$.

By Lemma \ref{lemma:vsunion}, $\Grdab(X,p,q)$ is a union of reduced schemes of pure dimension $\rho$, namely $C(T)$ for pontableaux $T$ with no left-removals. So it too has pure dimension $\rho$. Each $C(T)$ has a dense open subset of refined series, hence so does $\Grdab(X,p,q)$ as a whole.
\end{proof}

\begin{proof}[Proof of Theorem \ref{thm:grdab-chain-euler}]
Lemma \ref{lemma:ct-attributes}, combined with Proposition \ref{prop:cut-n-paste} and Theorem \ref{prop:ptmuchi}, show that
\begin{eqnarray*}
\chi(\Grdab(X,p,q)) &=& \sum_{P \in \PT(g,r,d,\alpha,\beta)} \mu(P) \chi(P)\\
&=& (-1)^{g-|\sigma|} \# (\mbox{standard set-valued tableaux on $\sigma$ of content $\{1,\ldots,g\}$}).
\end{eqnarray*}
\end{proof}

\subsection{Proof of the main theorem} \label{ss:mainthm-proof}

We now deduce our main result, Theorem \ref{thm:main}, from properties about smoothing of limit linear series, together with the analogous statement (Theorem \ref{thm:grdab-chain-euler}) for chains of elliptic curves.

\begin{proof}[Proof of Theorem \ref{thm:main}]
Let $(X,p,q)$ be a general twice-marked curve of genus $g$. By Theorem \ref{thm:osserman-rhohat}, $\Grdab(X,p,q)$ is nonempty if and only if $\hat{\rho} \geq 0$, or equivalently $|\sigma| \leq g$. This is equivalent to the existence of a set-valued tableau on $\sigma$ with content $\{1,\ldots,g\}$. 
The theorem holds vacuously in case $\hat{\rho} < 0$, so we assume that $\hat{\rho} \geq 0$.

By semicontinuity, there is a dense open subset $\mathcal{U}$ of $\mathcal{M}_{g,2}$ on which $\chi(\Grdab(X,p,q))$ is constant. Let $(X_0,p_0,q_0)$ be a generic twice-marked elliptic chain, as defined in \ref{sec:prelim-lls}, and let $B$ be the spectrum of a discrete valuation ring. Then there exists a flat deformation of $(X_0,p_0,q_0)$ with base $B$ such that the induced morphism $B \rightarrow \overline{\mathcal{M}}_{g,2}$ sends the generic point to a point in $\mathcal{U}$. Replacing $B$ with a finite base extension if necessary, we may assume that the family of curves over $B$ is a smoothing family in the sense of \cite[Definition 3.9]{osserman-limit-not-compact-type}. Denote the general member of this family by $(X_\eta,p_\eta,q_\eta)$.

Theorem \ref{thm:grdab-chain-structure} shows that the hypotheses of \cite[Corollary 3.3]{murray-osserman} are satisfied, hence there exists a flat proper scheme over $B$ whose special fiber is the Eisenbud-Harris space on $(X_0,p_0,q_0)$, and whose general fiber is $G^{r,\alpha,\beta}_d(X_\eta,p_\eta,q_\eta)$. By flatness, the Euler characteristic of the structure sheaf of the general fiber is equal to that of the special fiber. Since the generic point of $B$ is sent to $\mathcal{U}$, it follows that this is also the Euler characteristic of the structure sheaf of $G^{r,\alpha,\beta}_d(X,p,q)$ for a general twice-marked curve $(X,p,q)$.
\end{proof}

\begin{remark}
The results of \cite{murray-osserman} that we use above are stated in terms of varieties $G^r_d(X)$ without marked points. However, similar arguments apply to the situation where marked points are present, as has already been noted by Osserman. 
\end{remark}

\noindent {\bf Acknowledgments.}  This work was influenced by conversations with a number of people, to whom we are grateful. We thank Greta Panova for suggesting to us that we look into set-valued tableaux.  Thanks to the American Institute of Mathematics and to Brian Osserman and Ravi Vakil, organizers of the AIM workshop on degenerations in algebraic geometry.  Ravi Vakil helped us crystallize the idea of the main theorem at that workshop. Thanks are also due to Brian Osserman for providing numerous useful references.  We thank Montserrat Teixidor i Bigas for introducing us to some aspects of this topic and for conversations in the early stages of our work. MC was supported by an NSA Young Investigators Grant, NSF DMS-1701924, and a Sloan Research Fellowship.

\bibliographystyle{amsalpha}
\bibliography{./my}

\providecommand{\bysame}{\leavevmode\hbox to3em{\hrulefill}\thinspace}
\providecommand{\MR}{\relax\ifhmode\unskip\space\fi MR }
\providecommand{\MRhref}[2]{%
  \href{http://www.ams.org/mathscinet-getitem?mr=#1}{#2}
}
\providecommand{\href}[2]{#2}
\begin{thebibliography}{CLMPTiB18}

\bibitem[ACGH85]{acgh}
E.~Arbarello, M.~Cornalba, P.~A. Griffiths, and J.~Harris, \emph{Geometry of
  algebraic curves. {V}ol. {I}}, Grundlehren der Mathematischen Wissenschaften,
  vol. 267, Springer-Verlag, New York, 1985.

\bibitem[ACT21]{anderson-chen-tarasca-k-classes}
Dave Anderson, Linda Chen, and Nicola Tarasca, \emph{{K-classes of
  {B}rill--{N}oether Loci and a Determinantal Formula}}, International
  Mathematics Research Notices (2021), rnab025.

\bibitem[Buc02]{buch-littlewood-richardson}
Anders~Skovsted Buch, \emph{A {L}ittlewood-{R}ichardson rule for the
  {$K$}-theory of {G}rassmannians}, Acta Math. \textbf{189} (2002), no.~1,
  37--78. \MR{1946917}

\bibitem[CLMPTiB18]{clpt}
Melody Chan, Alberto L\'opez~Mart\'in, Nathan Pflueger, and Montserrat
  Teixidor~i Bigas, \emph{Genera of {B}rill-{N}oether curves and staircase
  paths in {Y}oung tableaux}, Trans. Amer. Math. Soc. \textbf{370} (2018),
  no.~5, 3405--3439. \MR{3766853}

\bibitem[CLMTiB17]{castorena-lopez-teixidor}
Abel Castorena, Alberto L\'opez~Mart\'in, and Montserrat Teixidor~i Bigas,
  \emph{Invariants of the {B}rill-{N}oether curve}, Adv. Geom. \textbf{17}
  (2017), no.~1, 39--52. \MR{3652231}

\bibitem[CP19]{chan-pflueger-relative}
Melody Chan and Nathan Pflueger, \emph{Relative {R}ichardson varieties},
  preprint arXiv:1909.12414, 2019.

\bibitem[CP21]{chan-pflueger-combinatorial}
\bysame, \emph{Combinatorial relations on skew {S}chur and skew stable
  {G}rothendieck polynomials}, Algebr. Comb. \textbf{4} (2021), no.~1,
  175--188. \MR{4226561}

\bibitem[EH83]{eh-divisors}
D.~Eisenbud and J.~Harris, \emph{Divisors on general curves and cuspidal
  rational curves}, Invent. Math. \textbf{74} (1983), no.~3, 371--418.
  \MR{724011}

\bibitem[EH86]{eh-lls}
David Eisenbud and Joe Harris, \emph{Limit linear series: basic theory},
  Invent. Math. \textbf{85} (1986), no.~2, 337--371. \MR{846932}

\bibitem[GH80]{griffiths-harris-variety}
Phillip Griffiths and Joseph Harris, \emph{On the variety of special linear
  systems on a general algebraic curve}, Duke Math. J. \textbf{47} (1980),
  no.~1, 233--272. \MR{563378}

\bibitem[MO16]{murray-osserman}
John Murray and Brian Osserman, \emph{Linked determinantal loci and limit
  linear series}, Proceedings of the American Mathematical Society \textbf{144}
  (2016), no.~6, 2399--2410.

\bibitem[Oss]{osserman-book}
Brian Osserman, \emph{Limit linear series, draft monograph},
  https://www.math.ucdavis.edu/~osserman/math/llsbook.pdf.

\bibitem[Oss14a]{osserman-lls-stack}
\bysame, \emph{Limit linear series moduli stacks in higher rank},
  arXiv:1405.2937, 2014.

\bibitem[Oss14b]{osserman-simple}
\bysame, \emph{A simple characteristic-free proof of the {B}rill-{N}oether
  theorem}, Bull. Braz. Math. Soc. (N.S.) \textbf{45} (2014), no.~4, 807--818.
  \MR{3296194}

\bibitem[Oss19]{osserman-limit-not-compact-type}
\bysame, \emph{Limit linear series for curves not of compact type}, J. Reine
  Angew. Math. \textbf{753} (2019), 57--88. \MR{3987864}

\bibitem[Sta97]{ec1}
Richard~P. Stanley, \emph{Enumerative combinatorics. {V}ol. 1}, Cambridge
  Studies in Advanced Mathematics, vol.~49, Cambridge University Press,
  Cambridge, 1997, With a foreword by Gian-Carlo Rota, Corrected reprint of the
  1986 original. \MR{1442260}

\bibitem[Tar13]{tarasca-codim-2}
Nicola Tarasca, \emph{Brill-{N}oether loci in codimension two}, Compos. Math.
  \textbf{149} (2013), no.~9, 1535--1568. \MR{3109733}

\bibitem[Wel85]{welters}
Gerald~E. Welters, \emph{A theorem of {G}ieseker-{P}etri type for {P}rym
  varieties}, Ann. Sci. \'Ecole Norm. Sup. (4) \textbf{18} (1985), no.~4,
  671--683. \MR{839690}

\end{thebibliography}
\end{document}